\documentclass[a4paper,11pt]{article}
\usepackage[top=3cm,bottom=3cm,left=2.5cm,right=2.5cm]{geometry}
\usepackage[T1]{fontenc}
\usepackage[utf8]{inputenc}
\usepackage{lmodern,microtype}
\usepackage[english]{babel}
\usepackage{amsmath,amssymb,amsthm,mathtools}
\usepackage{setspace,enumitem,booktabs,array,longtable,calc,aliascnt,cite,needspace}
\usepackage[hidelinks,hypertexnames=false]{hyperref}
\usepackage[nameinlink,capitalize,noabbrev]{cleveref}
\setlist[enumerate]{leftmargin=2.2em,itemsep=0.12em,topsep=0.25em}
\setlist[itemize]{leftmargin=2em,itemsep=0.12em,topsep=0.25em}
\allowdisplaybreaks
\let\originalthebibliography\thebibliography
\renewcommand{\thebibliography}[1]{\originalthebibliography{#1}\setlength{\itemsep}{2pt}\interlinepenalty=10000}
\newtheorem{lemma}{Lemma}[section]
\newaliascnt{theorem}{lemma}
\newtheorem{theorem}[theorem]{Theorem}
\aliascntresetthe{theorem}
\newaliascnt{proposition}{lemma}
\newtheorem{proposition}[proposition]{Proposition}
\aliascntresetthe{proposition}
\newaliascnt{corollary}{lemma}
\newtheorem{corollary}[corollary]{Corollary}
\aliascntresetthe{corollary}
\newcommand{\cF}{\mathcal F}
\newcommand{\cK}{\mathcal K}
\newcommand{\cQ}{\mathcal Q}
\DeclareMathOperator{\dist}{dist}
\newcommand{\tightlist}{\setlength{\itemsep}{0pt}\setlength{\parskip}{0pt}}
\newcommand{\proofpart}[1]{\par\smallskip\Needspace{3\baselineskip}\noindent\emph{#1.}\quad}

\hypersetup{pdftitle={Degree-list Kempe equivalence for 3-connected graphs},pdfauthor={Hongyang Wang},pdfsubject={Graph coloring and Kempe equivalence}}
\title{Degree-list Kempe equivalence for 3-connected graphs}
\author{Hongyang Wang\thanks{Email address: \href{mailto:hwanghj@connect.ust.hk}{\text{hwanghj@connect.ust.hk}}}\\[0.5em]
\normalsize Department of Mathematics,\\
\normalsize Hong Kong University of Science and Technology,\\
\normalsize Clear Water Bay, Hong Kong}
\date{October 26, 2026}
\begin{document}
\maketitle
\begin{abstract}
In a proper coloring, a Kempe change interchanges two colors on a connected component of the subgraph they induce. For a fixed list assignment $L$, a Kempe change is $L$-valid if it transforms an $L$-coloring into another $L$-coloring. We prove that if $G$ is a $3$-connected graph other than a complete graph or the triangular prism, and $L$ is a list assignment with $|L(v)|\ge d_G(v)$ for every vertex $v$, then $G$ has a proper $L$-coloring and any two such colorings can be transformed into one another by a finite sequence of $L$-valid Kempe changes. This proves a conjecture of Chakraborty, Feghali and Mahmoud~\cite{CFM}.
\end{abstract}
\noindent\textbf{Mathematics Subject Classification (2020)}: 05C15.

\noindent\textbf{Keywords}: List coloring; Kempe equivalence; vertex connectivity; Gallai trees.
\section{Introduction}\label{sec:introduction}

All graphs are finite, simple and undirected. For a graph \(G\) and a set \(X\subseteq V(G)\), let \(G[X]\) be the induced subgraph on \(X\) and let \(G-X=G[V(G)\setminus X]\). We identify an induced subgraph with its vertex set when no ambiguity arises. We write \(N_G(v)\) and \(d_G(v)\) for the neighborhood and degree of \(v\), and \(\Delta(G)\) and \(\delta(G)\) for the maximum and minimum degrees. Also, \(N_G(X)=\bigcup_{v\in X}N_G(v)\). Subscripts are omitted when the graph is clear. Two disjoint vertex sets are \emph{anticomplete} if no edge joins them.

A \emph{list assignment} \(L\) assigns a finite set of colors \(L(v)\) to each vertex \(v\). A coloring is \emph{proper} if adjacent vertices have different colors. An \emph{\(L\)-coloring} is a proper coloring \(\varphi\) satisfying \(\varphi(v)\in L(v)\) for every vertex, and \(G\) is \emph{\(L\)-colorable} if it has one. An ordinary \emph{\(k\)-coloring} is the special case \(L(v)=\{1,\ldots,k\}\) for every vertex; the chromatic number \(\chi(G)\) is the least \(k\) for which such a coloring exists. A \emph{\(k\)-assignment} has lists of size \(k\) at every vertex. A \emph{degree-assignment} satisfies \(|L(v)|\ge d_G(v)\) at every vertex, and is \emph{tight} if equality holds everywhere.

For a proper coloring \(\varphi\) and distinct colors \(a,b\), let \(G_\varphi(a,b)\) be the subgraph induced by the vertices colored \(a\) or \(b\). Its connected components are the \emph{\((a,b)\)-components} of \(\varphi\), also called \emph{Kempe chains}. A \emph{Kempe change} interchanges \(a\) and \(b\) on one such component. Either color may be absent from the current coloring.

A Kempe change is \emph{\(L\)-valid} if it transforms an \(L\)-coloring into another \(L\)-coloring. Two \(L\)-colorings are \emph{\(L\)-equivalent}, written \(\varphi\sim_L\psi\), if one can be transformed into the other by a finite sequence of \(L\)-valid Kempe changes. The lists remain fixed throughout the sequence. For ordinary \(k\)-colorings, we say simply \emph{Kempe equivalent}. We call the equivalence classes \emph{Kempe classes}. Following Cranston and Mahmoud~\cite{CM}, we call \(G\) \emph{\(L\)-swappable} if it has an \(L\)-coloring and all its \(L\)-colorings are \(L\)-equivalent, and \emph{degree-swappable} if this holds for every degree-assignment.

Kempe introduced these changes in his work on the four-color problem~\cite{Kempe}. For the broader context of reconfiguration, see Nishimura~\cite{Nishimura}.

We first recall results for ordinary colorings. A graph is \emph{\(d\)-degenerate} if every subgraph with at least one vertex has a vertex of degree at most \(d\). Las Vergnas and Meyniel~\cite{LVM} proved that all \(k\)-colorings of a \(d\)-degenerate graph are Kempe equivalent whenever \(k>d\). Mohar~\cite{Mohar} asked whether all \(k\)-colorings of a connected noncomplete \(k\)-regular graph are Kempe equivalent. Feghali, Johnson and Paulusma~\cite{FJP} settled the \(3\)-regular case, with the triangular prism as the only further exception. The triangular prism is the Cartesian product \(K_2\square K_3\): it consists of two disjoint triangles \(a_1a_2a_3\) and \(b_1b_2b_3\), together with the edges \(a_ib_i\) for \(i=1,2,3\). Bonamy, Bousquet, Feghali and Johnson~\cite{BBFJ} proved the assertion for every \(k\ge4\).

For planar graphs, Mohar~\cite{Mohar} proved that all \(k\)-colorings of a planar graph \(G\) are Kempe equivalent when \(k>\chi(G)\). Feghali~\cite{FeghaliPlanar} proved that all \(4\)-colorings of a \(4\)-critical planar graph are Kempe equivalent; here \(4\)-critical means that the graph has chromatic number \(4\) and every proper subgraph is \(3\)-colorable. Deschamps, Feghali, Kardo\v{s}, Legrand-Duchesne and Pierron~\cite{Deschamps} proved that any \(5\)-coloring of a planar graph can be transformed into any other by a number of Kempe changes bounded by a polynomial in the number of vertices.

Quantitative and algorithmic questions have also been studied. Bonamy, Delecroix and Legrand-Duchesne~\cite{BDL} proved that, for fixed \(\Delta\ge3\), any \(\Delta\)-coloring of a connected \(n\)-vertex graph of maximum degree \(\Delta\), other than a complete graph or the triangular prism, can be transformed into any other by at most \(C_\Delta n^2\) Kempe changes, where \(C_\Delta\) depends only on \(\Delta\). On general graphs, deciding whether two given \(k\)-colorings are Kempe equivalent is PSPACE-complete for every fixed \(k\ge3\), as shown by Bonamy, Heinrich, Ito, Kobayashi, Mizuta, M\"{u}hlenthaler, Suzuki and Wasa~\cite{Diameter}.\footnote{PSPACE consists of decision problems solvable with polynomial workspace. A problem is PSPACE-complete if it belongs to PSPACE and every problem in PSPACE has a polynomial-time reduction to it.}

We now turn to list colorings. The classical degree-list colorability results characterize the connected graphs colorable from every degree-assignment; see Erd\H{o}s, Rubin and Taylor~\cite{ERT}. We recall the obstruction characterization in \cref{sec:preliminaries}. Degree-swappability requires, in addition, that any two colorings be transformable into one another by valid Kempe changes.

The line graph of \(G\) has vertex set \(E(G)\), with two vertices adjacent exactly when the corresponding edges share an endpoint. Cranston~\cite{CranstonEdge} proved that, for a planar graph \(G\) of maximum degree \(\Delta\), any two \(L\)-colorings of its line graph are \(L\)-equivalent for every \((\Delta+1)\)-assignment \(L\) when \(\Delta\ge9\), and for every \(\Delta\)-assignment \(L\) when \(\Delta\ge15\).

Cranston and Mahmoud~\cite{CM} proved that a connected graph \(G\) of maximum degree \(\Delta\ge3\) is \(L\)-swappable for every \(\Delta\)-assignment \(L\), unless \(G\) is \(K_{\Delta+1}\) or the triangular prism. They also proved swappability for the two exceptional graphs whenever the lists are not identical, and asked which graphs are degree-swappable. Their results include \(2\)-connected graphs that are not degree-swappable. The distinction between a degree-assignment and a \(\Delta\)-assignment matters for irregular graphs: the former permits smaller lists at vertices of smaller degree.

Chakraborty, Feghali and Mahmoud~\cite{CFM} proved that every \(4\)-connected noncomplete graph is degree-swappable. They conjectured that every \(3\)-connected graph other than a complete graph or the triangular prism is degree-swappable. We prove this conjecture.

\begin{theorem}\label{thm:main}
Let $G$ be a $3$-connected graph that is neither complete nor the triangular prism $K_2\square K_3$. For every finite list assignment $L$ satisfying $|L(v)|\ge d_G(v)$ for all $v\in V(G)$, the graph $G$ has an $L$-coloring, and all its $L$-colorings are $L$-equivalent.
\end{theorem}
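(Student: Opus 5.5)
We first dispose of existence. By the Erd\H{o}s--Rubin--Taylor characterization recalled in \cref{sec:preliminaries}, a connected graph fails to be degree-choosable only if every block is a complete graph or an odd cycle, that is, only if it is a Gallai tree. A $3$-connected graph has a single block, so a non-complete $3$-connected $G$ is not a Gallai tree (odd cycles are not $3$-connected). Hence $G$ has an $L$-coloring.

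For equivalence we reduce to tight assignments with a slack vertex and then use induction by vertex deletion. If some vertex $v$ has $|L(v)|>d(v)$, we argue directly. Given two $L$-colorings $\varphi,\psi$, we make them agree at a chosen vertex $u$ using Kempe changes. We then delete $u$ and the color $\varphi(u)$ from the lists of $N(u)$. The result $G-u$ with lists $L'$ satisfies $|L'(w)|\ge d_{G-u}(w)$, and the slack moves toward $u$'s neighbours. We then apply the degenerate-type argument of Las Vergnas--Meyniel in list form.

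Concretely, we order the vertices by decreasing distance from the slack vertex. Every vertex other than the root then has a neighbour later in the order, so every vertex has an available color in the ordering. The standard recoloring proof then shows that colorings of such a graph from lists with this property are $L$-equivalent. One lifts Kempe changes in $G-u$ to $G$, noting that an $L'$-valid chain extended by $u$ is an $L$-valid chain in $G$. This handles all non-tight $L$, and also tight $L$ whose lists are not all identical on some edge. In the latter case, choose an edge $xy$ and a color $c\in L(x)\setminus L(y)$. Recolor so that $x$ has color $c$ in both colorings, then delete $x$. The vertex $y$ now gains slack, and $G-x$ is $2$-connected.

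The remaining core case is tight assignments that are locally identical, hence constant on $G$ because $G$ is connected. Then $G$ is $k$-regular and $L(v)=\{1,\dots,k\}$. This is exactly Mohar's question, which is settled by Feghali--Johnson--Paulusma for $k=3$ (excluding the prism) and by Bonamy--Bousquet--Feghali--Johnson for $k\ge4$. Thus the regular, constant-list case is handled by citation.

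The main obstacle is the step that makes two colorings agree at the deleted vertex $x$ while keeping $G-x$ well behaved. After deleting $x$, the graph $G-x$ is only $2$-connected, and we need the degree-swappability of $G-x$ under $L'$ with at least one slack vertex. A connected graph with a slack vertex is always swappable by the degeneracy argument, so connectivity of $G-x$ suffices there. The delicate part is bringing $\varphi(x)$ and $\psi(x)$ to $c$. Here I would use $3$-connectivity. Take the $(c,\varphi(x))$-chain at a neighbour of $x$ colored $c$. If such chains block the change, then $x$'s neighbourhood is Gallai-tree-like, which $3$-connectivity rules out unless $G$ is complete or the prism. I would try first a case analysis on $G[N(x)]$, in the spirit of Chakraborty--Feghali--Mahmoud: pick $x$ whose neighbourhood is not a clique, which exists since $G$ is not complete, and use two nonadjacent neighbours receiving a common color to create slack.
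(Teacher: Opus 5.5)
\textbf{There is a genuine gap: the step that carries the whole difficulty of the theorem is not proved.}

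Your preliminary reductions are fine and agree with the paper:
\begin{itemize}
\item existence follows from the Gallai-tree characterization;
\item strict surplus is \cref{lem:surplus}, which the paper simply cites (your Las Vergnas--Meyniel sketch would not prove it by itself, since a deleted tight vertex may have no free color when you lift a chain);
\item the constant-list case follows from the regular-graph results, and your citation of Feghali--Johnson--Paulusma and Bonamy--Bousquet--Feghali--Johnson works as well as the paper's \cref{thm:regular};
\item for a $c$-special vertex $x$, all colorings in $\cF(x,c)$ are equivalent, which is \cref{lem:fiber}.
\end{itemize}
The gap is the step ``recolor so that $x$ has color $c$ in both colorings.'' Moving an arbitrary $L$-coloring into $\cF(x,c)$ is the whole difficulty, and you only gesture at it. So the tight non-identical case is not ``handled''; it is the core. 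The constant-list case you call the core is the one disposed of by citation.

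Your heuristic would not work as stated. With $p=\varphi(x)$, the $(p,c)$-chain at $x$ can be blocked at any chain distance. A blocker at distance at least three is easy, via \cref{lem:pair}. A blocker at chain distance two, however, lies outside $N(x)$, so no case analysis of $G[N(x)]$ can see it. The reduced-list subgraphs that actually arise are $G-x$ and $G-\{x,w\}$, where $w$ is the middle $c$-colored vertex. These are only $2$-connected and connected, respectively. The latter can be a multi-block Gallai tree with blockwise-uniform reduced lists. $3$-connectivity does not exclude this, so the paper has to work through it.

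Concretely, the paper needs the following:
\begin{itemize}
\item the special vertex is chosen to have \emph{maximum degree}, which is what kills the clique cases;
\item the first obstruction layer is cleared by recoloring $G-x$, where an odd wheel survives and needs its own argument;
\item \cref{prop:reduction} handles $G-\{x,w\}$ through rooted block colorings and endblock recolorings, or else finds an induced $K_4^+$ or bipartite theta (\cref{lem:kernels});
\item even then a residual outcome remains: two nonadjacent vertices sharing a special color. This needs the separate \cref{thm:common}, a minimal-$|B_2|$ argument with its own structural analysis after deleting three vertices.
\end{itemize}

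Your alternative of making two nonadjacent neighbours of $x$ share a color is a Brooks-type existence device. To use it for equivalence, you would first have to reach such a coloring from an arbitrary one. That is the same unsolved problem.
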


Both exceptions are necessary. Identical lists of size \(n-1\) make \(K_n\) uncolorable. With identical lists \(\{1,2,3\}\), every bichromatic subgraph of the triangular prism is connected, so each Kempe change permutes the color names globally. A fixed coloring of one triangle extends in two ways to the prism, and these two colorings are not related by a global color permutation. Hence they are not \(L\)-equivalent.

\Needspace{5\baselineskip}
Our proof builds on the special-color lemmas of Chakraborty, Feghali and Mahmoud~\cite{CFM} and the induced-subgraph extension results of Cranston and Mahmoud~\cite{CM}. The additional difficulty at connectivity three is that deleting three vertices may disconnect the graph. We handle the components that are uncolorable from the reduced lists through their block structure and use recolorings of terminal blocks to remove the obstructions to a valid Kempe change.

The list-coloring tools and notation are collected in \cref{sec:preliminaries}. We prove a reduction at a vertex of maximum degree in \cref{sec:reduction}, and a theorem for two nonadjacent vertices with a common special color in \cref{sec:common}. These results are combined in \cref{sec:main-proof} to prove \cref{thm:main}.

\section{List-coloring tools and notation}\label{sec:preliminaries}

A \emph{block} is a maximal \(2\)-connected subgraph, a bridge together with its ends, or an isolated vertex. The \emph{block-cut tree} of a connected graph has a vertex for each block and each cutvertex, with adjacency given by containment. In a graph with more than one block, an \emph{endblock} contains exactly one cutvertex; its \emph{interior} consists of the other vertices. A \emph{Gallai tree} is a connected graph whose blocks are cliques or odd cycles.

When the blocks are rooted at a vertex \(r\), the \emph{entry} \(e_B\) of a block \(B\) is \(r\) if \(r\in B\), and otherwise is the cutvertex of \(B\) toward \(r\) in the block-cut tree. The block containing a nonroot cutvertex on its side toward \(r\) is its \emph{parent block}; the other incident blocks are its \emph{child blocks}.

A list assignment \(M\) on a Gallai tree \(H\) is \emph{blockwise uniform} if there are color sets \(S_B\), one for each block \(B\), such that
\[
 |S_B|=\begin{cases}|V(B)|-1,&B\text{ is a clique},\\2,&B\text{ is an odd cycle},\end{cases}
 \qquad M(u)=\bigcup_{B\ni u}S_B,
\]
and \(S_B\cap S_{B'}=\varnothing\) whenever distinct blocks \(B,B'\) share a cutvertex. We call \(S_B\) the palette of \(B\). The triangle may be treated as a clique. A path has \emph{order} equal to its number of vertices and \emph{length} equal to its number of edges. In particular, deleting a vertex of an odd cycle leaves a path of even order and odd length.

The following characterization of uncolorable degree-assignments is stated in the form used by Chakraborty, Feghali and Mahmoud~\cite{CFM}, who attribute it to Borodin, Erd\H{o}s, Rubin and Taylor, and Gallai.

\begin{lemma}[{\cite{CFM}}]\label{lem:gallai}
Let $H$ be connected and let $M$ be a degree-assignment on $H$. Then $H$ has no $M$-coloring if and only if $H$ is a Gallai tree and $M$ is blockwise uniform.
\end{lemma}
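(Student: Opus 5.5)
We prove the two directions separately. The easy direction (blockwise uniform on a Gallai tree implies no coloring) goes by induction on the number of blocks. The harder direction (no coloring implies this structure) is the classical Borodin--Erd\H{o}s--Rubin--Taylor argument, also by induction on \(|V(H)|\).

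\proofpart{Sufficiency}
Suppose \(H\) is a Gallai tree and \(M\) is blockwise uniform. If \(H\) is a single block \(B\), every vertex has list \(S_B\).
\begin{itemize}
\item If \(B\) is a clique, then \(|S_B|=|V(B)|-1\) colors cannot properly color \(K_{|V(B)|}\).
\item If \(B\) is an odd cycle, two colors cannot color it.
\end{itemize}
Otherwise, take an endblock \(B\) with cutvertex \(c\), and suppose \(\varphi\) is an \(M\)-coloring. The interior of \(B\) has lists \(S_B\), and \(c\) has list \(S_B\cup(\text{palettes of other blocks at }c)\), where these palettes are disjoint from \(S_B\). If \(\varphi(c)\in S_B\), then \(B\) is colored from \(S_B\) alone, contradicting the single-block case. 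So \(\varphi(c)\notin S_B\). Then \(\varphi\) restricted to \(H'=H-(V(B)\setminus\{c\})\) is a coloring from the lists \(M'\) obtained by deleting \(S_B\) from \(M(c)\). But \(M'\) is blockwise uniform on the Gallai tree \(H'\), so this contradicts induction.

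\proofpart{Necessity}
Assume \(H\) has no \(M\)-coloring. First note that \(|M(v)|=d(v)\) for every \(v\), and \(H\) is not colorable for any sub-assignment. Indeed, if some \(|M(v)|>d(v)\), order the vertices by decreasing distance from \(v\) and color greedily: each vertex other than \(v\) has an uncolored neighbor when it is reached, and \(v\) has a spare color. We may therefore shrink lists to exactly \(d(v)\) at any stage; more precisely, any proper sub-list assignment containing a vertex with a surplus is colorable. Consequently, for any vertex \(v\) and any color \(a\in M(v)\), the graph \(H-v\) with lists \(M(u)\setminus\{a\}\) for \(u\in N(v)\) is again a degree-assignment with no coloring. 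If some component of \(H-v\) had surplus, we could color it and extend to \(v\).

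\proofpart{Key steps}
\begin{enumerate}
\item Show that every block is a clique or odd cycle. If \(H\) is \(2\)-connected and not a clique or odd cycle, Rubin's block lemma gives an even cycle or a theta with only one chord (an induced subgraph \(\theta_{2,2,2k}\) or an even cycle). Such a configuration is degree-choosable. Equivalently, the standard argument applies: find nonadjacent \(x,y\) with a common neighbor \(z\) such that \(H-\{x,y\}\) is connected. If the lists of \(x,y\) share a color, use it on both and color greedily toward \(z\). If they are disjoint, choose colors making \(z\) have slack. Either way we get a coloring, a contradiction. For a general \(H\), apply this to endblocks via the cutvertex reduction above, using induction on \(H-\) interior.
\item Show the lists are uniform on each block. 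Within a \(2\)-connected block, if two adjacent vertices \(u,w\) have \(M(u)\ne M(w)\), pick \(a\in M(u)\setminus M(w)\). Color \(u\) with \(a\); then \(w\) gains surplus in \(H-u\), which is connected in the \(2\)-connected case. This yields a coloring, a contradiction. Hence each block, considered with its interior lists, has a common palette of the right size: \(|V(B)|-1\) for a clique, or \(2\) for a cycle, by degree counting.
\item Handle cutvertices by peeling an endblock \(B\) with cutvertex \(c\). Each interior vertex has list \(S_B\), and the argument above shows \(S_B\subseteq M(c)\); otherwise \(B\) can be colored with \(c\) taking a color outside \(S_B\) whenever needed. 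Moreover, deleting \(B\)'s interior and \(S_B\) from \(M(c)\) leaves an uncolorable degree-assignment. By induction this remainder is a Gallai tree with blockwise uniform lists, and disjointness at \(c\) follows since \(|M(c)|=d(c)\) forces \(S_B\) to be disjoint from the remaining palette at \(c\).
\end{enumerate}

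\proofpart{Main obstacle}
The main obstacle is step 1, the \(2\)-connected non-Gallai case: producing the pair \(x,y\) with \(H-\{x,y\}\) connected, or the even-cycle or theta structure. Here I would invoke the classical lemma that a \(2\)-connected graph that is neither complete nor a cycle has an induced even cycle with at most one chord or a suitable \(\theta\). Alternatively I would use Brooks-type vertex selection, taking a vertex of degree at least \(3\) with two nonadjacent neighbors.
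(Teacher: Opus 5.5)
The paper does not prove this lemma. It quotes it from Chakraborty, Feghali and Mahmoud, who attribute it to Borodin, Erd\H{o}s--Rubin--Taylor and Gallai, so your argument has to stand on its own. Your sufficiency direction is correct. The necessity direction has a genuine gap at the cutvertex step.

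Your Step~2 argument does extend beyond the \(2\)-connected case, because interior vertices of an endblock \(B\) with cutvertex \(c\) are not cutvertices of \(H\). It shows that the interior vertices share one list \(T\), whose size is their common degree in \(B\), and that \(T\subseteq M(c)\). But nothing you wrote shows that \(B\) is a clique or an odd cycle, or that \(|T|=d_B(c)\). ``Degree counting'' presupposes the former. ``Apply this to endblocks \dots\ using induction on \(H-\)interior'' is circular, because induction on \(H'=H-(V(B)\setminus\{c\})\) requires already knowing that \(M(c)\setminus T\) is a tight list at \(c\) from which \(H'\) cannot be colored.

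The missing idea is to restrict the list at \(c\). Let \(F\subseteq M(c)\) consist of the colors \(\alpha\) for which \(c=\alpha\) does not extend to an \(M\)-coloring of \(B\).
\begin{itemize}
\item Since \(H\) is uncolorable, \(H'\) has no coloring with \(c\) colored from \(M(c)\setminus F\). The surplus argument then forces \(|M(c)\setminus F|\le d_{H'}(c)\), that is, \(|F|\ge d_B(c)\).
\item Giving \(c\) any \(d_B(c)\) colors of \(F\) produces a tight, uncolorable degree-assignment on \(B\) itself. The \(2\)-connected case (or a direct check for a bridge) then makes \(B\) a clique or an odd cycle with all lists equal.
\item Hence the chosen colors form \(T\), so \(|T|=d_B(c)\). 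Also \(F=T\), since a color outside \(T\) leaves \(B-c\) with surplus at the neighbors of \(c\).
\end{itemize}
Only now is \(M(c)\setminus T\) a tight list at \(c\) for which \(H'\) is uncolorable, and induction applies.

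The \(2\)-connected case also needs reordering. In the Brooks-type route, ``if the lists of \(x,y\) are disjoint, choose colors giving \(z\) slack'' fails when \(M(x)\cup M(y)\subseteq M(z)\). This is possible because \(d(z)\) can be at least \(d(x)+d(y)\). In addition, the standard (Lov\'asz) construction of a triple \(x,z,y\) with \(H-\{x,y\}\) connected needs \(d(z)\ge3\) when \(H\) is not \(3\)-connected. Both issues disappear if you prove Step~2 first. In a \(2\)-connected uncolorable \(H\) all lists then coincide, so \(H\) is regular with identical lists, and Brooks' theorem together with the parity check for cycles finishes.

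The Rubin route is valid, but it rests on the degree-choosability of even cycles with at most one chord, which is the substantive part of the classical proof. Your parenthetical \(\theta_{2,2,2k}\) belongs to the characterization of \(2\)-choosable graphs, not to Rubin's lemma.

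Finally, in your preliminary paragraph, coloring one component of \(H-v\) that has surplus does not allow an extension to \(v\); every component must be colorable. That remark is not used later.
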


We say that \(L\) has \emph{strict surplus at \(x\)} if \(|L(x)|>d_G(x)\). The following result handles this case.

\begin{lemma}[{\cite{CFM,CM}}]\label{lem:surplus}
Let $G$ be connected and let $L$ be a degree-assignment. If $|L(x)|>d_G(x)$ for some vertex $x$, then $G$ is $L$-swappable.
\end{lemma}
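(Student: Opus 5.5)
We prove \cref{lem:surplus} by induction on $|V(G)|$, peeling off the surplus vertex $x$ last. Fix a degree-assignment $L$ with $|L(x)|>d_G(x)$. Colorability is immediate: order $V(G)$ by nonincreasing distance from $x$ (a BFS order from $x$ reversed), so that every vertex other than $x$ has a neighbour later in the order, and colour greedily. Each $v\ne x$ sees at most $d_G(v)-1<|L(v)|$ coloured neighbours when it is coloured, and $x$ sees at most $d_G(x)<|L(x)|$. For swappability we show that every $L$-coloring is $L$-equivalent to a single fixed one.

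The inductive step deletes a vertex far from $x$. Choose $v\ne x$ such that $G-v$ is connected, for example a leaf of a BFS tree rooted at $x$ at maximum distance from $x$; the case $|V(G)|=1$ is trivial. The restriction $L'=L|_{G-v}$ is a degree-assignment on $G-v$, since degrees only drop, and $x$ still has strict surplus, so by induction $G-v$ is $L'$-swappable. Given two $L$-colorings $\varphi,\psi$ of $G$, we lift a sequence of $L'$-valid Kempe changes from $\varphi|_{G-v}$ to $\psi|_{G-v}$ to $G$. After that, only $v$ may differ, and we fix it by one further change.

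We lift a single change on $G-v$ that swaps colours $a,b$ on a component $C$ as follows.
\begin{itemize}
\item If $v$ is coloured neither $a$ nor $b$, or is not adjacent to $C$, the same change on $G$ is $L$-valid, except that it may force $v$ into conflict only when $v$ has colour $a$ or $b$.
\item If $v$ has colour $a$ or $b$ and is adjacent to $C$, then in $G$ the chain containing $C$ also contains $v$ and possibly other components. Before performing the change, we recolour $v$ with a colour of $L(v)$ not used on $N(v)$, if one exists. Since $|L(v)|\ge d(v)$, such a colour fails to exist only when the neighbours of $v$ use $d(v)$ distinct colours exhausting $L(v)$.
\end{itemize}

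This tight case is the main obstacle. Here we would use the strict surplus propagated along the path from $v$ toward $x$. Rather than working only with $v$, we strengthen the induction hypothesis to allow pre-processing: we show any $L$-coloring can first be moved, by $L$-valid changes, so that a chosen vertex $v$ at maximum distance from $x$ has a free colour. We do this by pushing a free colour from $x$ outward along a shortest $x$--$v$ path $x=u_0,\dots,u_k=v$. Vertex $x$ always has a free colour $c$. If $u_1$ is blocked, recolouring $x$ with $c$ (a single-vertex Kempe change with $c$ absent from $N(x)$) frees the old colour of $x$ at $u_1$ unless another neighbour of $u_1$ carries it, in which case a Kempe change on an appropriate $(\cdot,\cdot)$-chain is used. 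This mimics the standard \cite{CM,LVM} argument for degenerate-type orderings, and we expect this step to require the most care.

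Once each lifted change can be realised, possibly after recolouring $v$ (recolouring $v$ to a free colour is a valid single-vertex Kempe change), we reach a colouring agreeing with $\psi$ off $v$. Then $\psi(v)$ is a colour absent from $N(v)$ in this colouring, so a single change at $v$ finishes the proof.
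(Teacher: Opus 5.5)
Your existence argument is the one the paper gives (colour descendants before parents in a spanning tree rooted at $x$). Your final singleton change is also fine. The paper does not reprove swappability; it cites \cite{CFM,CM}. On that part your proposal has a genuine gap.

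You delete a vertex $v\ne x$ that has no surplus, and then try to lift an arbitrary sequence supplied by induction on $G-v$. Its intermediate colourings need not extend to $G$. Moreover, the strengthened claim you intend to prove (every colouring can be moved so that $v$ has a free colour) is false. Take $G=C_4$ with vertices $x,w_1,v,w_2$ in cyclic order, $L(x)=\{1,2,3\}$ and $L(w_1)=L(w_2)=L(v)=\{1,2\}$.
\begin{itemize}
\item $v$ is the only vertex at maximum distance from $x$, so it is the vertex you delete.
\item Every $L$-colouring has $\varphi(w_1)=\varphi(w_2)=3-\varphi(v)$, so $v$ never has a free colour.
\item $(w_1,x,w_2)=(1,3,2)$ is a colouring of $G-v$ with no extension to $G$, and the induction may route through it.
\item The change $(2,3,2)\to(1,3,2)$ cannot be lifted: in $G$ the $(1,2)$-chain of $w_1$ is $\{w_1,v,w_2\}$.
\end{itemize}

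The underlying issue shows in your tight case, which you also misstate. $N(v)$ can never exhaust $L(v)$, since $\varphi(v)\in L(v)$. The real condition is that $N(v)$ covers $L(v)\setminus\{\varphi(v)\}$, which at a tight vertex puts only $d(v)-1$ colours on $d(v)$ neighbours. That one unit of slack lets the colour $b$ occur twice on $N(v)$, merging two chains of $G-v$, or lie outside $L(v)$, making the swap at $v$ invalid. So the Las Vergnas--Meyniel argument breaks down at $v$, and the example shows that pushing colours outward from $x$ cannot repair it.

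The missing idea is to delete the surplus vertex $x$ itself. Every component of $G-x$ contains a neighbour of $x$, which now has strict surplus, so induction applies to each component and $G-x$ is swappable. Now lift a change on an $(a,b)$-chain $C$ of $G-x$. The only nontrivial case is $\varphi(x)=a$ with $x$ adjacent to $C$.
\begin{itemize}
\item If some $f\in L(x)\setminus\{a\}$ is missing from $N(x)$, then $f\ne b$, and you first recolour $x$ to $f$ by a singleton change.
\item Otherwise $|L(x)|\ge d(x)+1$ forces $|L(x)|=d(x)+1$, with the neighbours of $x$ carrying pairwise distinct colours that exhaust $L(x)\setminus\{a\}$. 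Hence $b\in L(x)$ occurs exactly once on $N(x)$, and the $(a,b)$-chain of $x$ in $G$ is exactly $C\cup\{x\}$. Swapping it is $L$-valid and restricts to the required change on $G-x$.
\end{itemize}
Your final singleton change, now performed at $x$, completes the induction.
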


For existence, color descendants before parents in a spanning tree rooted at \(x\): each nonroot vertex has an uncolored parent, and \(x\) has strict surplus.

Cranston and Mahmoud~\cite{CM} use degree-assignments with lists of size exactly \(d_G(v)\). We apply their results to tight assignments; \cref{lem:surplus} handles strict surplus.

\begin{theorem}[{\cite{CM}}]\label{thm:regular}
Let $G$ be connected, let $\Delta=\Delta(G)\ge3$, and let $L$ be a $\Delta$-assignment. If $G\notin\{K_{\Delta+1},K_2\square K_3\}$, then $G$ is $L$-swappable. If $G$ is one of these two graphs, the same conclusion holds whenever the lists are not identical.
\end{theorem}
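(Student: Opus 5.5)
The plan is to reduce first to the tight regular case, then to remove a single vertex and use \cref{lem:surplus} on what remains. If some vertex $x$ has $d_G(x)<\Delta$, then $|L(x)|=\Delta>d_G(x)$, so $L$ has strict surplus at $x$ and \cref{lem:surplus} gives $L$-swappability directly. So I assume that $G$ is $\Delta$-regular and that $L$ is a tight degree-assignment.

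\textbf{Existence.} By \cref{lem:gallai}, $G$ fails to be $L$-colorable only if it is a Gallai tree with blockwise uniform lists. A $\Delta$-regular connected Gallai tree with $\Delta\ge3$ has no odd-cycle blocks. It also has a single block, because an endblock clique's interior vertex has degree $|B|-1$ while the cutvertex has larger degree. Hence $G=K_{\Delta+1}$ and all lists coincide. Every other case is colorable, including $K_{\Delta+1}$ and the prism with non-identical lists.

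\textbf{Equivalence via deletion of one vertex.} Fix a vertex $v$ and two $L$-colorings $\alpha,\beta$ with $\alpha(v)=\beta(v)=c$. On $H=G-v$ I take the list $L'(u)=L(u)\setminus\{c\}$ for $u\in N(v)$ and $L'(u)=L(u)$ otherwise. This $L'$ is a degree-assignment on $H$, and $\alpha|_H,\beta|_H$ are $L'$-colorings.

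Every $L'$-valid Kempe change in $H$ lifts to an $L$-valid Kempe change in $G$. A chain avoiding $c$ does not meet $v$. A $(c,d)$-chain contains no neighbor of $v$ colored $c$, and a $d$-colored neighbor of $v$ would already lie in the $G$-chain through $v$. This adjacency bookkeeping needs to be checked carefully, possibly by restricting to chains that do not touch $N(v)$, or by arguing that a chain containing $v$'s neighbors only exchanges colors among them consistently with $v$.

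Each component $C$ of $H$ contains a neighbor of $v$. So $C$ has strict surplus for $L'$ as soon as some $u\in N(v)\cap C$ has $c\notin L(u)$, or $C$ has a vertex of degree less than $\Delta$ in $G$. Neither is automatic, so instead I will choose $v$ so that $C$ is not a blockwise-uniform Gallai tree. Rather than needing strict surplus, I then argue directly, by the same spanning-tree-rooted-at-a-neighbor argument as \cref{lem:surplus}, that such components are $L'$-swappable.

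\textbf{Normalizing $v$.} I will first show that every $L$-coloring is $L$-equivalent to one in which some fixed vertex $v$ has two neighbors $x,y$ of equal color. Such $x,y$ exist because $G$ is connected, regular and noncomplete, so it contains an induced path $xvy$. Making $x$ and $y$ the same color uses a Kempe change on the $(\varphi(x),\varphi(y))$-chain at $y$ when it avoids $x$. Once $N(v)$ has a repeated color, $v$ has a free color, so it can be recolored by single-vertex changes (trivial Kempe chains). This lets me set $\varphi(v)$ to any allowed value, in particular to a common value $c$ for both colorings.

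The main obstacle is the case where the chain from $y$ always reaches $x$ for every such path and every color pair. There I would exploit the global rigidity this forces: all bichromatic subgraphs near $v$ are connected. I would show, as in Feghali--Johnson--Paulusma and Bonamy--Bousquet--Feghali--Johnson, that this forces $G$ to be $K_{\Delta+1}$ or the prism $K_2\square K_3$ with identical lists. For those two graphs with non-identical lists, I would pick $v$ adjacent to a vertex whose list differs and recolor it directly into a color outside the common palette, which creates the needed repetition.
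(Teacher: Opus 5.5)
\textbf{There is a genuine gap.} The paper does not prove this statement; it imports it from Cranston and Mahmoud. Their argument builds on the Feghali--Johnson--Paulusma and Bonamy--Bousquet--Feghali--Johnson resolution of Mohar's question, and your proposal breaks down exactly where that work is needed. Your reduction to the tight regular case and your existence argument are fine.

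The failure is in the equivalence step, which needs $H=G-v$ to be $L'$-swappable. Nothing you invoke gives this. When all lists are identical (ordinary $\Delta$-colorings of a $\Delta$-regular graph, the core case), every neighbor of $v$ lists $c$. So $L'$ is tight at every vertex of $H$ for every choice of $v$ and $c$, and \cref{lem:surplus} is unavailable; no choice of $v$ changes this.

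Your fallback confuses colorability with swappability. \cref{lem:gallai} and the spanning-tree ordering only show that a component which is not a blockwise-uniform Gallai tree has \emph{some} $L'$-coloring. The swappability half of \cref{lem:surplus} needs strict surplus somewhere. Tight degree-assignments on colorable graphs need not be swappable at all: Cranston and Mahmoud exhibit $2$-connected graphs that are not degree-swappable.

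You also cannot induct on the statement itself, since $(H,L')$ is an irregular graph with a degree-assignment, not a $\Delta$-assignment. Your last paragraph hands the remaining rigid case to Feghali--Johnson--Paulusma and Bonamy--Bousquet--Feghali--Johnson without an argument. So the substance of the theorem is never proved.

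The normalization step is also wrong as stated.
\begin{itemize}
\item If two neighbors of $v$ share a color, then $N(v)$ uses at most $\Delta-1$ colors of $L(v)$. This guarantees only one color of $L(v)$ absent from $N(v)$, and that color may be $\varphi(v)$ itself. So $v$ need not have any alternative color; you need $N(v)$ to miss two colors of $L(v)$.
\item Even with one free color, you can only move $v$ between two values. You cannot set it to an arbitrary common value $c$ for both $\alpha$ and $\beta$.
\item The Kempe change at $y$ that creates the repeated color must itself be $L$-valid. This is not automatic when the lists differ.
\end{itemize}

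The lifting step, by contrast, is fine once argued as in \cref{lem:fiber}. An $L'$-valid $(c,d)$-chain in $H$ cannot contain a neighbor of $v$, since that neighbor would be $d$-colored and would receive $c\notin L'$. Hence such a chain is a whole component in $G$.
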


A \emph{theta graph} is the union of three internally vertex-disjoint paths with the same two distinct ends, with no other edges. Let \(K_4^+\) denote a graph obtained from \(K_4\) by subdividing one edge at least once.

\begin{lemma}[{\cite{CM}}]\label{lem:kernels}
Every bipartite theta graph and every $K_4^+$ is degree-swappable for tight degree-assignments. If a connected graph $G$ contains either as an induced subgraph, then $G$ is degree-swappable.
\end{lemma}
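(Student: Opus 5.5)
The lemma has two parts. I would prove the first part (the kernels themselves) by direct arguments exploiting their many degree-$2$ vertices. I would then deduce the second part by a reduction to an induced kernel, keeping the coloring outside the kernel fixed. The reduction is the more routine half, so I describe it first, taking the first part as given.

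\emph{Reduction to an induced kernel.} Let $H\subseteq G$ be an induced bipartite theta graph or $K_4^+$, and let $L$ be a degree-assignment on $G$. If $L$ has strict surplus somewhere we are done by \cref{lem:surplus}, so assume $L$ is tight.

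I would order $V(G)\setminus V(H)$ as $w_1,\dots,w_m$ by nonincreasing distance from $H$, so that each $w_i$ has a neighbour later in the order or in $H$. First I would show that $G$ is $L$-colorable. Color $w_1,\dots,w_m$ greedily; each has an uncolored neighbour when it is colored, so a color is always available. Then set $L'(u)=L(u)\setminus\{\varphi(w):w\in N(u)\setminus V(H)\}$ for $u\in V(H)$. This gives $|L'(u)|\ge d_H(u)$, so $L'$ is a degree-assignment on $H$. Since $H$ is $2$-connected and neither a clique nor an odd cycle, it is not a Gallai tree, and \cref{lem:gallai} colors it.

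For equivalence, the key observation concerns an $L'$-valid Kempe change on an $(a,b)$-chain $K$ of $H$. Such a change is also an $L$-valid Kempe change in $G$ on the same chain. Indeed, if some outside vertex $w$ colored $a$ were adjacent to $u\in K$ colored $b$, then $a\notin L'(u)$, so the swap would not be $L'$-valid. Hence the $(a,b)$-component of $G$ containing $K$ is exactly $K$. Consequently, any two $L$-colorings that agree outside $H$ are $L$-equivalent, by the first part (or by \cref{lem:surplus} if $L'$ is not tight).

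It remains to show that any $L$-coloring can be moved to one agreeing with a fixed target $\psi$ outside $H$. I would do this by induction along the reversed order $w_m,\dots,w_1$, i.e.\ starting with the vertices nearest $H$. The plan is to recolor $w_i$ to $\psi(w_i)$ by a Kempe change on the chain through $w_i$ in colors $\{\varphi(w_i),\psi(w_i)\}$. Before that, I would use the flexibility of $H$ and of later vertices (via the first part and induction) to first clear $\psi(w_i)$ from the neighbours of $w_i$ where needed, so that the change is valid. Choosing $G-\{w_1,\dots,w_{i-1}\}$-style subgraphs keeps the lists of the remaining part a degree-assignment with surplus at the neighbour of the deleted vertex, and then \cref{lem:surplus} applies there.

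\emph{Kernels themselves.} Let $H$ be a bipartite theta graph or a $K_4^+$ with a tight assignment $L$. Pick a degree-$2$ vertex $v$ on a subdivided path. Then $H-v$ is connected, and $L$ restricted to it has strict surplus at the neighbours of $v$, so all $L$-colorings of $H-v$ are equivalent by \cref{lem:surplus}.

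Given two $L$-colorings $\varphi,\psi$ of $H$, I would lift a Kempe sequence from $\varphi|_{H-v}$ to $\psi|_{H-v}$ step by step. A change on an $(a,b)$-chain $K$ lifts unchanged if $\varphi(v)\notin\{a,b\}$ or if $v$ is not adjacent to $K$. The bad case is when $v$ has color $a$, is adjacent to $K$, and $b\notin L(v)$. In that case I would first recolor $v$ to its other list color, if that color is free, or else perform a preliminary change along the path containing $v$ to free it. Here the structure matters. In a bipartite theta graph the paths through $v$ have even cycles, which makes the $2$-colored paths behave like bipartite chains. In $K_4^+$ the subdivided edge provides extra room.

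\emph{Main obstacle.} The main obstacle is exactly the bad case: controlling $v$ while lifting the sequence, or equivalently a careful case analysis of the two kernels. I would first try to reduce it to the case where every vertex in the chosen path of degree-$2$ vertices carries one of at most two colors, where explicit Kempe chains along the even cycle can be written down.
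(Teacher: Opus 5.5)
The paper gives no proof of this lemma. Both halves, the swappability of the two kernels under tight lists and the induced-subgraph extension, are imported from Cranston and Mahmoud, with \cref{lem:surplus} covering non-tight lists. Two parts of your proposal are correct: the existence argument, and the observation that two colorings agreeing outside $H$ are equivalent (granting the first half). But neither substantive half is actually proved.

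For the kernels, you stop at the only nontrivial case, and your framework cannot decide it. Deleting a degree-$2$ vertex $v$, applying \cref{lem:surplus} to the connected graph $H-v$, and lifting works verbatim for the $4$-cycle, which is not degree-swappable. Take the cycle $v_1v_2v_3v_4$ with $L(v_1)=\{1,2\}$, $L(v_2)=\{2,3\}$, $L(v_3)=\{3,4\}$, $L(v_4)=\{4,1\}$. The only $L$-colorings are $(1,2,3,4)$ and $(2,3,4,1)$. A valid Kempe change from one would have to produce the other. They differ at four vertices carrying four distinct colors, whereas a Kempe change alters only vertices of two colors.

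So your step ``recolor $v$, or free a color by a preliminary change along its path'' must use exactly what a bipartite theta graph or a $K_4^+$ has and $C_4$ lacks: the third path and its degree-$3$ branch vertices. Your remark that even cycles make the chains behave bipartitely applies equally to $C_4$, so it cannot be the mechanism. The lift is also not ``unchanged'' when $b\in L(v)$. If $K$ contains a $b$-colored neighbor of $v$, the component in $H$ contains $v$ and possibly the chain through its other neighbor, so the planned sequence on $H-v$ is perturbed.

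The last step of your reduction has a similar gap. Clearing $\psi(w_i)$ from $N(w_i)\cap V(H)$ means recoloring $H$ from lists that, at those vertices, have lost both the current and the target color of $w_i$. That is not a degree-assignment, so neither the first half nor \cref{lem:gallai} supplies such a coloring.

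Your appeal to \cref{lem:surplus} on $G-\{w_1,\dots,w_{i-1}\}$ conflates two reductions. Surplus is present only if the colors of the deleted vertices are \emph{not} removed from their neighbors' lists, but then Kempe sequences there need not lift to $G$. If those colors are removed, as lifting requires, the surplus is in general gone. For instance, with $G=C_4$, the lists above and any vertex $w$, the path $G-w$ is swappable by \cref{lem:surplus} while $G$ is not.

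What is missing is an actual mechanism for changing the color of one outside vertex. For example, delete a vertex $w\notin V(H)$ farthest from $H$. This leaves a connected graph that still contains $H$ as an induced subgraph. Induction, together with the lifting argument in the proof of \cref{lem:fiber}, then places each $\cF(w,\alpha)$ in a single class. The entire difficulty becomes connecting the sets $\cF(w,\alpha)$ for different $\alpha$, and your proposal contains no argument for that.
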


The induced-subgraph extension result of Cranston and Mahmoud~\cite{CM} transfers degree-swappability from an induced subgraph with at least one vertex to a connected graph for tight assignments. The remaining assignments are covered by \cref{lem:surplus}.

A color \(a\) is \emph{special for \(x\)} if \(a\in L(x)\) and \(a\notin L(u)\) for some neighbor \(u\) of \(x\); we also say that \(x\) is \emph{\(a\)-special}. A vertex is \emph{special} if it has a special color, and \emph{nonspecial} otherwise. Thus a nonspecial vertex satisfies \(L(x)\subseteq L(u)\) for every neighbor \(u\). Specialness depends only on the lists, not on the coloring.

For \(a\in L(x)\), let \(\cF(x,a)\) be the set of \(L\)-colorings that assign \(a\) to \(x\). When we say that all colorings in this set are equivalent, the connecting sequences are allowed to leave the set.

\begin{lemma}[{\cite{CFM}}]\label{lem:fiber}
Let $G$ be $2$-connected, let $L$ be a degree-assignment, and let $a$ be special for $x$. Then $\cF(x,a)$ is nonempty, and all its members are $L$-equivalent.
\end{lemma}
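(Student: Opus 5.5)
The plan is to delete $x$, fix its color $a$, and reduce the question to \cref{lem:surplus} applied to $G-x$. Since $G$ is $2$-connected, $G-x$ is connected. Define a list assignment $L'$ on $G-x$ by
\[
L'(v)=L(v)\setminus\{a\}\ \text{ if } v\in N_G(x), \qquad L'(v)=L(v)\ \text{ otherwise}.
\]
Colorings in $\cF(x,a)$ are in bijection with $L'$-colorings of $G-x$: extend by $\varphi(x)=a$, or restrict to $G-x$. This works because a neighbor of $x$ may use any color of $L(v)$ except $a$.

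First check that $L'$ is a degree-assignment on $G-x$ with strict surplus. For $v\in N_G(x)$ we have $d_{G-x}(v)=d_G(v)-1$ and $|L'(v)|\ge|L(v)|-1\ge d_{G-x}(v)$. Other vertices keep both their list and their degree. Since $a$ is special for $x$, there is a neighbor $u$ with $a\notin L(u)$. Then $|L'(u)|=|L(u)|\ge d_G(u)>d_{G-x}(u)$. By \cref{lem:surplus}, $G-x$ is $L'$-swappable. In particular it has an $L'$-coloring, so $\cF(x,a)\neq\varnothing$.

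The main point is lifting. Let $\varphi',\psi'$ be the restrictions of two members $\varphi,\psi$ of $\cF(x,a)$. They are joined by a sequence of $L'$-valid Kempe changes in $G-x$, and it suffices to show that each such change, applied to a $(c,d)$-component $K$ of the current coloring of $G-x$, is also an $L$-valid Kempe change in $G$ with $x$ kept at color $a$.

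If $a\notin\{c,d\}$, then $x$ is not colored $c$ or $d$, so $K$ is still a $(c,d)$-component in $G$. The new coloring is an $L$-coloring, since it is $L'$-valid and $L'\subseteq L$.

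Suppose instead that $c=a$. No neighbor of $x$ is colored $a$, because $a\notin L'(w)$ for $w\in N(x)$. If $K$ contained a neighbor $w$ of $x$ colored $d$, the swap would give $w$ the color $a\notin L'(w)$, contradicting $L'$-validity. Hence $K$ contains no neighbor of $x$ and is anticomplete to $x$. So $K$ is again a $(c,d)$-component in $G$, and swapping it keeps $x$ at $a$ and yields a proper $L$-coloring.

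Thus every step lifts, and the whole sequence stays inside $\cF(x,a)$. This case $c=a$ is the only place where care is needed, and I expect it to be the crux. The definition of $L'$-validity automatically forbids exactly the chains that would otherwise absorb $x$.
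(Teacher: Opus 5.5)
Your proposal is correct and is essentially the paper's own proof. You delete $a$ from the neighbor lists on $G-x$, use the $a$-omitting neighbor to obtain strict surplus so that \cref{lem:surplus} applies, and lift each change by observing that an $L'$-valid $(a,d)$-chain can contain no neighbor of $x$, so it is already a whole bichromatic component of $G$.
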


\begin{proof}
On $H=G-x$, delete $a$ from the list of each neighbor of $x$, leaving other lists unchanged. Denote the resulting assignment by $M$. It is a degree-assignment, and a neighbor $z$ with $a\notin L(z)$ satisfies $|M(z)|>d_H(z)$. Since $H$ is connected, \cref{lem:surplus} gives an $M$-coloring and makes all $M$-colorings equivalent. Each extends by $x=a$.

These changes lift to $G$ with $x=a$ fixed. If a changed component uses two colors different from $a$, it cannot join $x$. If it uses $a$ and another color, it contains no neighbor of $x$: such a neighbor cannot already have color $a$, and changing its other color to $a$ would violate its reduced list. Thus the component is also a component of the same bichromatic subgraph of $G$. Every coloring in $\cF(x,a)$ restricts to an $M$-coloring, which proves the lemma.
\end{proof}

\begin{lemma}[{\cite{CFM}}]\label{lem:pair}
Let $G$ be $3$-connected and let $L$ be a degree-assignment. If $x,y$ are nonadjacent special vertices, with special colors $a,b$, respectively, then $\cF(x,a)\cap\cF(y,b)$ is nonempty and all colorings in $\cF(x,a)\cup\cF(y,b)$ are $L$-equivalent. The colors $a$ and $b$ may be equal.
\end{lemma}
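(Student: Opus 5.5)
We work in $H=G-\{x,y\}$, which is connected because $G$ is $3$-connected. This mirrors the proof of \cref{lem:fiber}. On $H$ define $M$ as follows: delete $a$ from the list of each neighbor of $x$, and delete $b$ from the list of each neighbor of $y$. A common neighbor loses both colors when $a\ne b$, and loses only one color when $a=b$. In either case $|M(v)|\ge d_H(v)$: each removal of a color is matched by the loss of an edge to $x$ or $y$, and when $a=b$ a common neighbor loses two edges but only one color. So $M$ is a degree-assignment on $H$, and every $M$-coloring extends by $x=a$, $y=b$. This extension is proper since $x,y$ are nonadjacent.

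To apply \cref{lem:surplus} we need a vertex of strict surplus in $H$. Let $u$ be a neighbor of $x$ with $a\notin L(u)$. If $u\ne y$, then $u\in H$ gains surplus, because removing the edge $ux$ cost nothing from its list. The case $u=y$ is impossible since $x,y$ are nonadjacent. If $a=b$, any common neighbor of $x$ and $y$ also has surplus. So \cref{lem:surplus} gives that $H$ is $M$-swappable, and in particular $\cF(x,a)\cap\cF(y,b)\neq\varnothing$.

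The lifting argument is the same as in \cref{lem:fiber}. Consider a $(c,d)$-component changed in $H$, with $x=a$ and $y=b$ fixed. If $\{c,d\}$ avoids $\{a,b\}$, the component cannot meet $x$ or $y$. If it contains $a$, it has no neighbor of $x$: such a neighbor is not colored $a$, and it cannot receive $a$ because $a\notin M$ of it. The same holds for $b$ and $y$. Hence the component is also a component in $G$, and the change is $L$-valid there. Therefore all colorings in $\cF(x,a)\cap\cF(y,b)$ are $L$-equivalent.

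Next we show that every $\varphi\in\cF(x,a)$ is equivalent to one in the intersection. Since $G$ is $3$-connected it is $2$-connected, so \cref{lem:fiber} makes all of $\cF(x,a)$ equivalent. It therefore suffices that $\cF(x,a)\cap\cF(y,b)$ is nonempty, which we have shown. This intersection lies in $\cF(x,a)$, so every member of $\cF(x,a)$ is equivalent to every member of the intersection. Symmetrically, $\cF(y,b)$ is one class by \cref{lem:fiber} and meets the intersection. The union $\cF(x,a)\cup\cF(y,b)$ is therefore a single class.

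The main obstacle is the verification that $M$ has strict surplus somewhere in $H$ and is a degree-assignment when $a=b$; this is handled by the counting above. The second point is that the lift needs $x,y$ nonadjacent, so that fixing $x=a$ and $y=b$ is consistent and the two endpoints do not interact.
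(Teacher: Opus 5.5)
Your proposal is correct and matches the paper's proof. Both use the same reduced assignment on $G-\{x,y\}$, strict surplus at a neighbor of $x$ omitting $a$ with \cref{lem:surplus} for existence, and \cref{lem:fiber} at $x$ and at $y$ for the equivalence. Your separate lifting paragraph for $\cF(x,a)\cap\cF(y,b)$ is valid but redundant, since \cref{lem:fiber} already makes each of $\cF(x,a)$ and $\cF(y,b)$ lie in a single Kempe class.
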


\begin{proof}
Delete $x,y$ and remove $a$ from each remaining neighbor list of $x$ and $b$ from each remaining neighbor list of $y$. The resulting assignment is a degree-assignment on the connected graph $G-\{x,y\}$. A neighbor $z$ of $x$ omitting $a$ is not $y$, and loses fewer colors than neighbors. Thus its reduced list has strict surplus. By \cref{lem:surplus}, a reduced-list coloring exists and extends with $x=a,y=b$. The equivalence assertion follows by applying \cref{lem:fiber} at $x$ and at $y$.
\end{proof}

Fix an \(L\)-coloring \(\varphi\), a vertex \(v\), its color \(p=\varphi(v)\) and a distinct color \(a\). Let \(K_\varphi(v;p,a)\) be the component containing \(v\) in \(G_\varphi(p,a)\). For \(i=1,2\), define
\[
 B_i(\varphi;v,p,a)=\{u\in V(K_\varphi(v;p,a)):
 \dist_{K_\varphi(v;p,a)}(v,u)=i,\ 
 \sigma_{p,a}(\varphi(u))\notin L(u)\},
\]
where \(\sigma_{p,a}\) interchanges \(p\) and \(a\). Thus \(B_i\) records the vertices at component distance \(i\) whose lists prohibit the proposed change. In particular, \(B_1=\{u\in N(v):\varphi(u)=a,\ p\notin L(u)\}\), and every vertex of \(B_2\) is colored \(p\) and omits \(a\) from its list. We suppress the parameters when they are fixed. All distances in these sets are measured in the indicated bichromatic component, not in the ambient graph. When the coloring changes, so do the component and the sets.

Allowing a Kempe change on a collection of bichromatic components, as in~\cite{CFM}, gives the same equivalence relation. Swapping one component leaves the others unchanged, so any valid simultaneous change can be performed one component at a time.

\Needspace{18\baselineskip}
\section{A reduction at a vertex of maximum degree}\label{sec:reduction}

\begin{proposition}\label{prop:reduction}
Let $G$ be a finite simple 3-connected graph, let $L$ be a tight degree-assignment, let $v$ be a maximum-degree vertex, and let $a\in L(v)$. Let $\varphi$ be a proper $L$-coloring, put $p=\varphi(v)\ne a$, and measure obstruction sets in the whole current component $K_\varphi(v;p,a)$. If $B_1(\varphi;v,p,a)=\varnothing$ and $B_2(\varphi;v,p,a)\ne\varnothing$, then at least one of the following holds.

\begin{enumerate}[label=\textnormal{(\roman*)}]
\item $\varphi$ is $L$-equivalent to a coloring $\psi$ with $\psi(v)=p$ and $B_1(\psi;v,p,a)=B_2(\psi;v,p,a)=\varnothing$.
\item $G$ has two nonadjacent special vertices for which $a$ is a common special color.
\item $\varphi$ is $L$-equivalent to a coloring in $\cF(v,a)$.
\item $G$ contains an induced bipartite theta graph or an induced $K_4^+$; hence $G$ is degree-swappable.
\end{enumerate}
\end{proposition}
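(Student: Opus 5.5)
Fix a vertex $w\in B_2$. Then $\varphi(w)=p$, $a\notin L(w)$, and $w$ has a neighbor $u\in N(v)$ with $\varphi(u)=a$. Since $B_1=\varnothing$, we have $p\in L(u)$. The plan is to remove the vertices of $B_2$ one at a time by recolorings that fix $v$ at $p$ and create no vertex of $B_1$; each time this fails, one of (ii)--(iv) should appear. The potential is $|B_2|$, or the whole configuration $(B_1,B_2)$ ordered lexicographically, and we want it to decrease strictly under the moves below. If it can always be driven to empty, (i) holds.

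\emph{Special colors.} Since $a\in L(u)\setminus L(w)$ and $uw\in E(G)$, the color $a$ is special for $u$. If $v$ is also $a$-special, i.e.\ some neighbor of $v$ omits $a$, then we look for a second $a$-special vertex nonadjacent to $u$ or to $v$. Good candidates are $v$ itself when $uv\notin E$ (but $uv\in E$ here), or another $a$-colored neighbor $u'$ of a vertex of $B_2$ that is nonadjacent to $u$. Any such pair gives (ii). Failing that, every $a$-special vertex lies in a clique together with $u$. If $v$ is $a$-special, \cref{lem:fiber} applies, since $G$ is $3$-connected and hence $2$-connected. So we reduce to transforming $\varphi$ into some coloring with $v=a$, which is (iii). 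That transformation is carried out by swapping the component once its obstructions are cleared. So the main case is when $v$ is nonspecial: $L(v)\subseteq L(z)$ for all $z\in N(v)$, and $|L(v)|=\Delta$.

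\emph{Recoloring $w$.} In the main case, we try to recolor $w$ with a color $c\in L(w)\setminus\{p\}$ missing from $N(w)$. Tightness gives $|L(w)|=d(w)$, and $w$ sees colors $p$? no — $w$ is colored $p$ and its neighbors avoid $p$, so a free color exists unless every color of $L(w)\setminus\{p\}$ appears exactly once on $N(w)$. More generally, we perform a Kempe change at $w$ on a $(p,c)$-chain that avoids $v$. Such a chain is found in $G-\{v,u\}$, which is connected by $3$-connectivity. If it exists, $w$ leaves the $(p,a)$-component, and we check that no new vertex of $B_1$ or $B_2$ is created. The tight case, where every such change is blocked, is the obstruction. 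There we consider $H=G-\{v,u,w\}$ with reduced lists $L(z)\setminus\{\varphi(v),\varphi(u),\varphi(w)\}$ on the neighbors of the three deleted vertices. This is a degree-assignment on each component of $H$. If some component is $M$-colorable with a different color at the critical vertices, we recolor that component using \cref{lem:surplus} or \cref{lem:gallai}, lifting the changes exactly as in the proof of \cref{lem:fiber}, and $w$ becomes free. Otherwise every component of $H$ is a Gallai tree with blockwise uniform lists by \cref{lem:gallai}.

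\emph{Main obstacle: the Gallai-tree structure.} Each endblock of each component $D$ of $H$ must send edges to at least two of $v,u,w$, since otherwise a cutvertex together with one deleted vertex would separate $G$, contradicting $3$-connectivity. Combining blockwise uniformity with the fact that lists near $v$ contain $L(v)$, which is large because $v$ has maximum degree, I expect to show the following dichotomy. Either an endblock is a clique whose palette forces a second $a$-special vertex nonadjacent to $u$, giving (ii). Or two endblocks joined through the block tree, together with $v$, $u$, $w$, contain induced paths that form a bipartite theta graph, via parity of odd-cycle blocks and paths of even order. Or, if a clique block meets $v,u$ with one subdivided connection, they form a $K_4^+$, giving (iv) by \cref{lem:kernels}. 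Otherwise we recolor a terminal block by permuting within its palette, which is allowed since the lists are uniform there. This frees $w$ and decreases the potential. Ruling out the remaining degenerate configurations, namely $G$ complete or the prism, which are excluded by hypothesis, is where I expect most of the case work to sit.
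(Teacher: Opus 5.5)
There is a genuine gap. The part of the proof that carries all the difficulty, namely the case where $w$ cannot be freed, is only stated as an expectation, and the setup you build for it cannot support it.

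\textbf{The reduction on $G-\{v,u,w\}$.} Read literally (remove both $p$ and $a$ at every neighbor of $v,u,w$), your reduced lists are not a degree-assignment: a neighbor of $v$ alone that lists $p$ and $a$ loses two colors for one lost neighbor. If you instead remove at each vertex only the colors of its deleted neighbors, properness makes $\varphi|_D$ an $M$-coloring of every component $D$. Then \cref{lem:gallai} gives you nothing, since it characterizes uncolorability, not the failure to free $w$. What you actually need is that a chosen $M$-coloring of $D$ is \emph{reachable} from $\varphi|_D$. \cref{lem:surplus} provides this only when $D$ has a vertex of strict surplus. A component without one need not be $2$-connected, and even $2$-connected graphs need not be degree-swappable. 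So ``recolor a terminal block by permuting its palette'' has no justification in your framework.

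\textbf{The Kempe step at $w$.} The $(p,c)$-chain of $w$ is determined by $\varphi$. Connectivity of $G-\{v,u\}$ does not keep it away from $v$: it contains $v$ whenever a $c$-colored neighbor of $v$ lies on it. Swapping it also creates new $p$-vertices, which can create new members of $B_2$, and its $L$-validity is never checked.

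\textbf{The case split.} If $v$ is $a$-special, \cref{lem:fiber} only connects colorings already in $\cF(v,a)$; reaching that set from $\varphi$ is exactly the problem, so this is not a reduction. Also, ``$v$ nonspecial'' is not the complement of ``$v$ is $a$-special.'' Finally, the prism is \emph{not} excluded by the proposition's hypotheses. It has no induced bipartite theta or $K_4^+$, so it must be handled through (i)--(iii).

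\textbf{The missing idea.} You noticed that $a$ is special for the middle vertex $u$, but used it only to look for pairs. By \cref{lem:fiber}, it means every $L$-coloring with $u=a$ is equivalent to $\varphi$. This gives complete freedom on $H=G-\{v,u\}$ with $v=p$ and $u=a$ fixed, and needs no swappability of $H$ at all.

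The paper then puts lists on $H$ that encode the absence of obstructions:
\begin{itemize}
\item delete $p$ at $v$-neighbors, or $a$ if $p$ is unlisted;
\item delete $a$ at $u$-neighbors, or $p$ if $a$ is unlisted;
\item delete both at common neighbors.
\end{itemize}
Every $M$-coloring then extends with $B_1=\varnothing$. A surviving $B_2$-path must pass through an $a$-colored vertex other than $u$; that vertex is nonadjacent to $u$ and $a$-special, which gives (ii). Since $p$ is deleted at $w$, the restriction $\varphi|_H$ is not an $M$-coloring. So uncolorability is a genuine case, and only there does \cref{lem:gallai} give a blockwise-uniform Gallai tree with exact deletions.

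What remains is the real proof, and your proposal contains none of it:
\begin{itemize}
\item The one-block clique is excluded using that $v$ has maximum degree.
\item The one-block odd cycle is settled by classifying rim vertices by their boundary neighbors, which yields (ii), (iii) or (iv).
\item The multiblock case needs rooted block colorings and a count forcing the $u$-neighbors to use $L(u)\setminus\{p,a\}$ bijectively.
\item It also needs $(a,q)$-swaps at $u$ through endblocks, including forced alternating paths and the parent-palette-$\{p\}$ exception.
\item It ends with a count of separated versus nonseparated endblocks, followed by re-rooting.
\end{itemize}
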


The color \(a\) need not be special for \(v\). The proof occupies the remainder of this section.

\subsection{Reduced lists and the one-block cases}\label{sec:reduced-lists}

Choose a shortest two-edge path \(v-x-z\) in the current whole \((p,a)\)-component with \(\varphi(x)=a\), \(\varphi(z)=p\), and \(a\notin L(z)\).

Such a path exists because \(B_2\ne\varnothing\). Properness gives \(vz\notin E(G)\), and \(B_1=\varnothing\) gives \(p\in L(x)\). Hence \(x\) is \(a\)-special, witnessed by \(z\).

Put \(H=G-\{v,x\}\). It is connected. For \(w\in V(H)\) define \(M(w)\) by the following table; boundary types refer to adjacency in \(G\).

\begin{center}
\begin{tabular}{@{}p{0.27\linewidth}p{0.64\linewidth}@{}}
\toprule
Adjacency of $w$ & Colors deleted from $L(w)$ \\
\midrule
To $v$ only & $p$ if $p\in L(w)$; otherwise $a$ \\
To $x$ only & $a$ if $a\in L(w)$; otherwise $p$ \\
To both $v,x$ & Both $p$ and $a$ \\
To neither & None \\
\bottomrule
\end{tabular}
\end{center}

At most one color is deleted for each lost boundary neighbor, so \(M\) is a degree-assignment on \(H\). The table also gives:

\begin{itemize}
\tightlist
\item
  no \(v\)-neighbor retains \(p\) in its \(M\)-list;
\item
  no \(x\)-neighbor retains \(a\) in its \(M\)-list;
\item
  if an \(M\)-colored \(v\)-neighbor receives \(a\), then it is adjacent to \(v\) only and \(p\) belongs to its original list.
\end{itemize}

Suppose first that \(H\) is \(M\)-colorable. Extend an \(M\)-coloring by \(v=p\) and \(x=a\). It is proper. Both it and the original coloring are in \(\cF(x,a)\), so \cref{lem:fiber} makes them equivalent. The first obstruction set is empty by the third property above.

If a vertex of \(B_2\) remains on a path \(v-y-w\), then \(y\ne x\). Indeed, if \(w\) were adjacent to \(x\) and omitted \(a\), the \(x\)-only rule would delete \(p\), while the common-neighbor rule also deletes \(p\); hence \(w\) could not be colored \(p\). Thus \(x\) and \(y\) are distinct \(a\)-colored vertices and therefore nonadjacent. The vertex \(x\) is \(a\)-special through \(z\), and \(y\) is \(a\)-special through \(w\). This gives outcome (i) or outcome (ii).

We may therefore assume that \(H\) is not \(M\)-colorable. By \cref{lem:gallai}, \(H\) is a Gallai tree and \(M\) is blockwise uniform. In particular, \(|M(w)|=d_H(w)\) for every \(w\in V(H)\).

Every deletion in the table is consequently exact. At a one-boundary vertex exactly one listed color is removed; at a common boundary vertex both \(p\) and \(a\) are present and removed; and at every other vertex nothing is removed. At \(z\), we have \(M(z)=L(z)\setminus\{p\}\) and \(a\notin L(z)\).

We first consider the case in which \(H\) has only one block. Since \(H\) is a Gallai tree, it is then a clique or an odd cycle.

\proofpart{\(H\) is a clique}

Suppose \(H=K_m\). The witness \(z\) has degree \(m\) in \(G\): it sees the other \(m-1\) vertices of \(H\) and \(x\), but not \(v\). Since \(v\) has maximum degree, \(d_G(v)\ge m\). Since \(v\) misses \(z\), it has at most \(m\) neighbors, so equality holds and \(v\) sees every vertex of \(H\) except \(z\).

If \(x\) had any other neighbor \(w\) in \(H\), then \(w\) would see all other vertices of \(H\), together with \(v\) and \(x\), and would have degree \(m+1>d_G(v)\). Hence \(N_G(x)=\{v,z\}\), contradicting 3-connectivity. Thus the one-block clique case cannot occur.

\proofpart{\(H\) is an odd cycle}

Assume \(H=C_n\) with odd \(n\ge5\); the triangle is already included in the complete case. Let its common two-color block palette be \(S=\{b_0,b_1\}.\)

The witness \(z\) satisfies \(L(z)=S\mathbin{\dot\cup}\{p\}\), so \(p,a\notin S\). Every rim vertex must see \(x\) or \(v\), because its rim degree is two and \(\delta(G)\ge3\). Call a rim vertex type \(X\), type \(V\), or type \(B\) according as it sees only \(x\), only \(v\), or both. The witness \(z\) has type \(X\). Exactness gives an original list \(S\) plus one of \(p,a\) at an exclusive vertex, and \(S\cup\{p,a\}\) at a type-\(B\) vertex.

Color \(z\) with \(p\) and color the even-order path \(C_n-z\) alternately from \(S\). Together with \(v=p\) and \(x=a\), this gives a proper coloring \(\chi\). Since \(x\) remains \(a\)-colored, \cref{lem:fiber} gives \(\varphi\sim_L\chi\). There is no \(a\)-colored rim vertex and \(z\) is the unique \(p\)-colored rim vertex.

If \(h\in L(x)\setminus(S\cup\{p,a\}),\) then \(h\) is absent from all neighbors of \(x\). The \((a,h)\)-component of \(x\) is the singleton \(\{x\}\), so change \(x\) to \(h\). No vertex is then colored \(a\), and the \((p,a)\)-component of \(v\) is the singleton \(\{v\}\); change \(v\) to \(a\). This is outcome (iii).

Assume no such \(h\) exists. Tightness and minimum degree imply that \(x\) has two or three rim neighbors. If it has two, then \(L(x)=\{p,a,h\}\) for one \(h\in S\). Choose the alternating orientation so that the rim neighbor of \(x\) other than \(z\) avoids \(h\). The same two singleton changes give outcome (iii).

Thus \(x\) has three rim neighbors \(z,u,w\) and \(L(x)=S\cup\{p,a\}\). If \(u,w\) receive the same color, the other color of \(S\) is free at \(x\), and the preceding argument applies. Hence \(u,w\) receive opposite colors.

For \(s\in\{u,w\}\), put \(h=\chi(s)\). The \((a,h)\)-component at \(x\) is exactly the edge \(xs\): the vertex \(x\) is the only \(a\)-colored vertex and \(s\) is its only \(h\)-colored neighbor.

If \(s\) has type \(X\) and lists \(a\), swap on \(xs\). The only \(a\)-colored vertex is then a nonneighbor of \(v\), so \(v\) is a singleton \((p,a)\)-component and can be changed to \(a\).

If \(s\) has type \(B\) and is not adjacent to \(z\), swap on \(xs\). Exactness gives \(p,a\in L(s)\), and the \((p,a)\)-component at \(v\) is exactly the edge \(vs\), because \(z\) is the only other \(p\)-colored rim vertex. A second swap sends \(v\) to \(a\).

Assume neither \(u\) nor \(w\) permits these changes. Every type-\(B\) vertex among them is adjacent to \(z\), and every type-\(X\) vertex among them omits \(a\).

If \(u,w\) both have type \(B\), then both are adjacent to \(z\). Since the rim is an induced cycle of length at least five, the two rim neighbors of \(z\) are nonadjacent. Both list \(a\) and both have the neighbor \(z\) omitting \(a\). Hence \(u,w\) are nonadjacent \(a\)-special vertices, giving outcome (ii).

Let \(b\) have type \(B\) and let \(u\) have type \(X\). The vertex \(b\) sees \(z\), while \(u\) omits \(a\).

If \(bu\notin E(G)\), choose \(u\) as the vertex colored \(p\) instead of \(z\). Since \(u\) is \(x\)-only and omits \(a\), exactness of the auxiliary rule gives \(p\in L(u)\setminus M(u)\) and \(M(u)=S\). Assign \(u=p\) and alternate \(S\) on \(C_n-u\). The extension is proper, and it keeps \(x=a\); hence it is in \(\cF(x,a)\) and is equivalent to \(\varphi\) by \cref{lem:fiber}. The adjacent rim vertices \(b,z\) then receive opposite colors of \(S\). Thus \(b\) is the unique neighbor of \(x\) with its \(S\)-color and is not adjacent to the new vertex colored \(p\); the preceding two changes apply.

If \(bu\in E(G)\), then \(zb\) and \(bu\) are consecutive rim edges. On the induced vertex set \(\{x\}\cup V(C_n)\), the four branch vertices \(x,z,b,u\) have every \(K_4\) edge except \(zu\), and the other \(z,u\) rim arc subdivides precisely that missing edge. The arc has an internal vertex because \(n\ge5\). The rim is induced and \(x\) has no additional rim neighbors, so this is an induced \(K_4^+\), giving outcome (iv).

Now \(z,u,w\) all have type \(X\) and omit \(a\). Consider the three cyclic gaps between them, measured in rim edges.

If one gap is even, let \(r_0\) be the third type-\(X\) vertex and assign \(p\) to it instead of \(z\). Since \(r_0\) is \(x\)-only and omits \(a\), exactness gives \(p\in L(r_0)\setminus M(r_0)\) with \(M(r_0)=S\). Assign \(r_0=p\) and alternate \(S\) on \(C_n-r_0\). This is proper and remains in \(\cF(x,a)\), so \cref{lem:fiber} connects it to \(\varphi\). The other two type-\(X\) vertices then receive the same color of \(S\), leaving the other color free at \(x\); outcome (iii) follows.

Assume all three gaps are odd. If two gaps have length one, the three type-\(X\) vertices are consecutive and \(x\) together with the rim again induces a \(K_4^+\), the remaining arc subdividing the missing edge.

Otherwise at least two gaps have length at least three. Orient the cycle and take the first edge of each of two such gaps. Each selected edge joins a type-\(X\) vertex to a type-\(V\) vertex, because \(x\) has no other rim neighbor. With the boundary edges, these give two internally disjoint length-three \(x,v\) paths. Their four internal vertices are distinct and have no cross-edge: each chosen type-\(V\) vertex is separated from the next type-\(X\) vertex by the length of the selected gap, and the two selected initial type-\(X\) vertices are separated by one of the long gaps. Together with the edge \(xv\), the induced six-vertex subgraph is the bipartite theta with path lengths \(1,3,3\). This is outcome (iv).

\subsection{Rooted block colorings and neighborhood constraints}\label{sec:rooted-colorings}

We may now assume that \(H\) has more than one block. Let \(r\) be adjacent to \(x\) but not to \(v\), with \(a\notin L(r)\) and \(p\in L(r)\setminus M(r)\).

Initially take \(r=z\). Root the block-cut tree at \(r\). For each block \(B\), let \(e_B\) be its entry vertex toward \(r\) and put \(P_B=B-e_B\).

The blockwise-uniform assignment gives a palette \(S_B\) for each block. A clique block can be colored by assigning the colors of \(S_B\) bijectively to \(P_B\). If \(B\) is an odd cycle, the even-order path \(P_B\) can be colored alternately from its two-color palette, in either orientation. These choices are independent because palettes of blocks meeting at a cutvertex are disjoint. Giving \(r\) the external color \(p\) therefore produces a proper coloring of \(H\) from \(M\) except at \(r\); extending by \(v=p,x=a\) is proper.

Conversely, every proper coloring that uses \(M\) off \(r\) and assigns \(p\) to \(r\) has this form. To see this, repeatedly strip a leaf block not containing \(r\). In a clique, the internal vertices use every color of the block palette. In an odd cycle, the even-order interior path has the two palette colors at its ends. Thus the cutvertex cannot use a child-block color. Delete the interior and remove that palette contribution from the cutvertex. Induction proves that every nonentry vertex uses its parent-block palette and every entry uses a color outside each child palette. At the final root block, \(p\notin M(r)=\bigcup_{B\ni r}S_B\), so the external root color is outside every incident block palette; this closes the exhaustion argument at the root.

Every extension is \(L\)-equivalent to \(\varphi\) by \cref{lem:fiber}, since it assigns \(a\) to \(x\). The connecting sequence need not fix \(r\) or \(v\), or preserve the obstruction sets.

Put \(Q_x=L(x)\setminus\{p,a\}\), so \(|Q_x|=d_G(x)-2\).

If some rooted coloring leaves a color \(q\in Q_x\) absent from \(N(x)\), then the \((a,q)\)-component of \(x\) is the singleton \(\{x\}\); change \(x\) to \(q\). Every \(a\)-colored neighbor of \(v\) is then \(M\)-colored and, by the boundary table, lists \(p\), so the first obstruction set is empty. If a second-layer path \(v-y-w\) remains, then \(y\) cannot be adjacent to \(x\) because the \(x\)-boundary rule deletes \(a\). Thus \(x\) and \(y\) are nonadjacent \(a\)-special vertices. We obtain outcome (i) or (ii).

Assume that no rooted coloring has a free color of \(Q_x\). In every rooted coloring, the \(d_G(x)-1\) neighbors of \(x\) in \(H\) include \(r=p\) and \(d_G(x)-2\) other vertices. No \(M\)-colored \(x\)-neighbor can use \(a\). To cover all colors of \(Q_x\), those other vertices must use \(Q_x\) bijectively. Consequently, in every rooted coloring:

\begin{itemize}
\tightlist
\item
  \(r\) is the unique \(p\)-colored neighbor of \(x\) in \(H\);
\item
  the remaining \(x\)-neighbors receive pairwise distinct colors of \(Q_x\).
\end{itemize}

Call a block \(B\) \emph{active} if \(P_B\cap N(x)\ne\varnothing\). Fix the colorings of all other blocks. The multiset of colors on \(P_B\cap N(x)\) is invariant under every permissible choice in \(B\). For a clique, this subset is either empty or all of \(P_B\). For an odd cycle, the numbers of its vertices in the two parity classes are equal and each is at most one, by uniqueness at \(x\). Hence every active block contributes its full palette, contained in \(Q_x\), and distinct active palettes are disjoint.

\subsection{Endblock recoloring and completion}\label{sec:endblock-argument}

Let \(B\) be an endblock with cutvertex \(c\) and interior \(I=B-c\). Three-connectivity forces \(I\) to have a neighbor of \(v\) and a neighbor of \(x\); otherwise \(\{c,v\}\) or \(\{c,x\}\) separates \(I\). Call \(B\) \emph{separated} if no vertex of \(I\) sees both \(v\) and \(x\), and \emph{nonseparated} otherwise.

Suppose \(u\in I\) is adjacent to \(x\) and lists \(a\). We show that a recoloring gives outcome (i) or (ii), unless \(u\) is adjacent to both boundaries and has a \(p\)-colored neighbor \(g\) with \(a\notin L(g)\).

\proofpart{An endblock containing the root}

Suppose \(r\in B\); it may equal \(c\). Since \(r\) omits \(a\), we have \(u\ne r\), and \(B\) is active. Choose the rooted block coloring so that \(u\) receives a color \(q\in S_B\subseteq Q_x\). The uniqueness of each color at \(x\) makes \(u\) the unique \(q\)-colored neighbor of \(x\). Every neighbor of \(u\) in \(H\) is in \(B\) and none is colored \(a\). Hence the \((a,q)\)-component at \(x\) is the edge \(xu\). Swap it, obtaining \(x=q\) and \(u=a\).

If \(u\) sees \(v\), exactness at a common boundary vertex gives \(p\in L(u)\), so \(u\) is not a vertex of \(B_1\). Every other \(a\)-colored neighbor of \(v\) is still \(M\)-colored and is safe. A remaining second-layer path through a middle vertex different from \(u\) gives a nonadjacent common-\(a\) special pair with \(x\). A path through \(u\) can use only the unique \(p\)-vertex \(r\) in \(B\) besides \(v\). Therefore it requires \(uv,ur\in E(G)\) and \(a\notin L(r)\).

We retain this exceptional \(p\)-colored neighbor \(r\). In a separated endblock, \(uv\) is absent, so outcome (i) or (ii) follows.

\proofpart{An endblock not containing the root}

Now \(r\notin B\), and \(c\) receives its color from its parent block \(D\). Choose \(u=q\in S_B\subseteq Q_x\).

\begin{itemize}
\tightlist
\item
  If the parent palette \(S_D\) contains a color outside \(\{a,p\}\), give \(c\) such a color.
\item
  If \(S_D=\{a,p\}\), give \(c\) the color \(p\); this is safe because then \(a\in L(c)\).
\item
  If \(S_D=\{p\}\), give \(c\) the color \(p\); this is safe unless \(a\notin L(c)\).
\item
  If \(S_D=\{a\}\), then \(c=a\) is forced; this is the alternating-path case below.
\end{itemize}

Whenever \(c\) is not colored \(a\), or \(uc\notin E(G)\), the \((a,q)\)-component at \(x\) is exactly the edge \(xu\). If \(c=q\), properness itself forces \(uc\notin E(G)\). After swapping \(xu\), the same first-layer verification applies. A remaining vertex of \(B_2\) reached through \(u\) requires all of the following:

\[
u\text{ sees both }v,x,\quad uc\in E(G),\quad \varphi(c)=p,
\quad a\notin L(c),\quad S_D=\{p\}.
\]

We retain this exceptional \(p\)-colored neighbor \(c\).

\proofpart{A forced alternating path}

Suppose \(uc\in E(G)\) and \(S_D=\{a\}\). Then \(D\) is the edge \(cc_1\) and \(c=a\). At \(c_1\), choose a parent-block color different from \(q\) whenever possible. If it is forced to \(q\), its parent palette is the singleton \(\{q\}\) and the parent block is another edge. Continue toward \(r\), alternating singleton palettes \(\{a\},\{q\}\) whenever no alternative exists. The process terminates because \(r=p\notin\{a,q\}\).

The \((a,q)\)-component at \(x\) is exactly the path beginning \(x-u-c\) and continuing along those forced edges up to, but not including, the first parent vertex whose color is outside \(\{a,q\}\). To check this, note that \(S_B\) omits \(a\), and \(u\) is the only \(q\)-colored neighbor of \(c\) in \(B\): for a clique, each palette color occurs once on \(B-c\); for an odd cycle, the ends of \(B-c\) have opposite colors. At \(c\), every side-block palette is disjoint from \(S_B\) and \(\{a\}\), and at each subsequent included vertex it is disjoint from the incident singleton palettes \(\{a\}\) and \(\{q\}\). Thus there is no side exit in \(H\). The uniqueness of each color at \(x\) prevents another \(q\)-colored neighbor of \(x\), properness prevents an \(a\)-colored neighbor of \(x\), and the other boundary vertex \(v\) has color \(p\notin\{a,q\}\). Hence the path is a whole \((a,q)\)-component of \(G\).

The list conditions are checked separately. At \(x\), both colors belong to \(L(x)\), since \(\varphi(x)=a\) and \(q\in Q_x\subseteq L(x)\). At \(u\), we have \(q\in S_B\subseteq M(u)\subseteq L(u)\), while \(a\in L(u)\) is the hypothesis of this recoloring argument. At every remaining path vertex, the two colors belong to its incident block palettes and hence to its original list. The swap on this whole component is therefore \(L\)-valid.

Swap it. Then \(x=q\) and \(u=a\). Every other newly \(a\)-colored vertex was formerly \(M\)-colored \(q\) and lists \(a\) through its incident palette. If such a vertex sees \(v\), the boundary table forces \(p\) into its original list; if it saw \(x\), it would contradict uniqueness of the \(q\)-colored neighbor. Hence the first layer is empty. Any second-layer path through a middle vertex different from \(u\) gives outcome (ii). Through \(u\), no \(p\)-neighbor remains in \(H\): the entry \(c\) now has color \(q\), and the other block neighbors use colors outside \(\{p,a\}\). Therefore outcome (i) or (ii) always follows.

The preceding argument gives the following alternatives:

\begin{itemize}
\tightlist
\item
  an unresolved separated endblock has every interior \(x\)-neighbor omitting \(a\);
\item
  an unresolved nonseparated endblock supplies an interior vertex that is \(a\)-special, adjacent to both \(v,x\), with a fixed \(p\)-colored neighbor omitting \(a\).
\end{itemize}

It remains to treat the exceptional case of a nonroot endblock \(B\) of order at least three with cutvertex \(c\) and parent edge \(D=cd\) of palette \(\{p\}\). Thus \(\varphi(c)=p\) and \(a\notin L(c)\), and some \(u\in B-c\) is adjacent to \(c,v,x\) and satisfies \(a\in L(u)\). We show that side branches at \(c\) cause no obstruction.

First, \(c\) sees neither boundary vertex. It cannot see \(x\), because it would be a second \(p\)-colored neighbor of \(x\) besides \(r\). It cannot see \(v\), because \(p\in M(c)\) through the parent block, whereas the boundary table removes \(p\) at every \(v\)-neighbor.

Let \(S=S_B\). Then \(|S|\ge2\) and \(S\subseteq Q_x\). The parent edge cannot contain \(r\), because its palette \(\{p\}\) would then be contained in \(M(r)\), contrary to \(p\notin M(r)\). Hence \(d\) is a nonentry vertex of a further parent block \(E\) with nonempty palette \(R\). First choose \(\tau\in R\), and then choose \(s\in S\setminus\{\tau\}\). This choice is possible because \(|S|\ge2\). Keep the parent-block coloring with \(d=\tau\) fixed, and independently color the terminal block so that \(u=s\). The color of \(c\) remains \(p\).

The vertex \(u\) permits \(a\), as proved in the preceding recoloring argument. The whole \((a,s)\)-component of \(x\) is exactly the edge \(xu\). At \(x\), the vertex \(u\) is its unique \(s\)-colored neighbor. At \(u\), the boundary \(v\) and the entry \(c\) are both colored \(p\), while all other endblock neighbors avoid \(a,s\). Swap \(xu\), giving \(x=s,u=a\).

Now \(c\) has no \(s\)-colored neighbor. The parent neighbor \(d\) has color \(\tau\ne s\). On the \(B\) side, the unique \(s\)-neighbor \(u\) has changed to \(a\): in a clique the nonentry colors are distinct, and in an odd cycle \(u\) is one endpoint of the even-order path and the other endpoint has the opposite palette color. Every child side-block at \(c\) has a palette disjoint from \(S_B\), so none contributes an \(s\)-colored neighbor. There is no boundary neighbor at \(c\), and properness excludes \(p\) from \(N(c)\). Therefore the whole \((p,s)\)-component of \(c\) is the singleton \(\{c\}\). Since \(p,s\in M(c)\), change \(c\) from \(p\) to \(s\).

The first obstruction set is still empty: the only new \(a\)-vertex is the common boundary vertex \(u\), and \(p\in L(u)\). A vertex of \(B_2\) through \(u\) is impossible because its only remaining \(p\)-neighbor is \(v\). Every other \(a\)-colored middle vertex is \(M\)-colored and nonadjacent to \(x\), and hence gives outcome (ii) with \(x\). Thus we obtain outcome (i) or (ii).

If a recoloring at an endblock gives outcome (i) or (ii), we are done. Otherwise every nonseparated endblock contains a common-boundary \(a\)-special vertex. Two nonseparated endblocks give two nonadjacent such vertices. Indeed, an interior vertex of an endblock belongs to exactly one block of \(H\); if an edge joined interiors of two distinct endblocks, the unique block containing that edge would have to be both endblocks. Thus distinct endblock interiors are anticomplete, and outcome (ii) follows.

If there are two separated endblocks, each contains an induced length-three \(v,x\) path through its interior. In a clique endblock, choose an interior \(v\)-only vertex and an interior \(x\)-only vertex; both types occur and are adjacent. In an odd-cycle endblock, the interior is a path whose vertices each have exactly one boundary label, and an adjacent change of label supplies the required path. The preceding anticompleteness proof excludes every cross-edge between the two interiors. Together with the edge \(vx\), the two paths induce the bipartite theta with path lengths \(1,3,3\). This is outcome (iv).

A connected multiblock graph has at least two endblocks. If it had at least three, two would have the same separated/nonseparated type and one of the preceding exits would apply. Hence the only unresolved pattern has exactly two endblocks, one separated \(B_S\) and one nonseparated \(B_C\). The leaves of the block-cut tree are precisely the endblocks, and every cutvertex node has degree at least two. A finite tree with exactly two leaves has maximum degree at most two, so the block-cut tree is a path.

Choose an interior \(x\)-neighbor \(r\) in \(B_S\). It omits \(a\) by the preceding recoloring argument and is \(x\)-only by separation. Exactness gives \(p\in L(r)\setminus M(r).\)

Root the same graph \(H\), with the same assignment \(M\) and block palettes, at this new vertex \(r\). Give \(r\) color \(p\) and choose the block colorings as in \cref{sec:rooted-colorings}. Each proper extension with \(v=p,x=a\) is equivalent to \(\varphi\) by \cref{lem:fiber}. Apply the counting argument of \cref{sec:rooted-colorings} to the new rooted family. A free color gives outcome (i) or (ii); otherwise the count shows that \(r\) is the unique \(p\)-colored neighbor of \(x\) in \(H\), and every active block contributes its full palette.

Now \(B_C\) is nonroot. It gives outcome (i) or (ii), or supplies a nonroot entry \(c\) of color \(p\) whose parent palette is \(\{p\}\). In the latter case, \(c\) sees neither boundary. If \(B_C\) were an edge, then \(c\) would have only its terminal and parent edges in the block-cut path and no boundary neighbor, giving \(d_G(c)=2\), impossible in a 3-connected graph. Hence \(|B_C|\ge3\), and the recoloring argument of \cref{sec:endblock-argument} applies. This proves \cref{prop:reduction}.\hfill\(\square\)

\begin{corollary}\label{cor:absorb}
Under the assumptions of \cref{prop:reduction}, suppose also that $a$ is special for $v$. Then either $\varphi$ is equivalent to a coloring in $\cF(v,a)$, or $G$ has two nonadjacent special vertices with common special color $a$.
\end{corollary}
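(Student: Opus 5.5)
We apply \cref{prop:reduction} and dispose of each outcome other than (iii) and (ii) using the extra hypothesis that $a$ is special for $v$. Outcomes (ii) and (iii) are already among the conclusions of the corollary. Outcome (iv) says that $G$ is degree-swappable. In particular all $L$-colorings are $L$-equivalent, and \cref{lem:fiber} (with $G$ $2$-connected and $a$ special for $v$) shows that $\cF(v,a)$ is nonempty. Hence $\varphi$ is equivalent to a member of $\cF(v,a)$, which is again outcome (iii). It remains to treat outcome (i).

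In outcome (i) we have $\varphi\sim_L\psi$ with $\psi(v)=p$ and $B_1(\psi;v,p,a)=B_2(\psi;v,p,a)=\varnothing$. Let $K=K_\psi(v;p,a)$. We consider the Kempe change interchanging $p$ and $a$ on $K$, and the plan is to show it is $L$-valid. Then the resulting coloring has $v=a$, lies in $\cF(v,a)$, and is equivalent to $\varphi$.

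At $v$ itself both colors are available: $p=\psi(v)\in L(v)$ and $a\in L(v)$ by hypothesis. Vertices of $K$ at distance $1$ and $2$ from $v$ are fine, since $B_1$ and $B_2$ are empty. The difficulty is vertices of $K$ at distance $\ge3$, which the obstruction sets do not control. So the main obstacle is that $B_1=B_2=\varnothing$ does not by itself make the whole swap valid.

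To deal with far vertices, I would first try to use the freedom in \cref{prop:reduction}, whose statement is about the whole component. Suppose some vertex $w\in K$ at distance $\ge3$ has $\sigma_{p,a}(\psi(w))\notin L(w)$. Take such a $w$ closest to $v$ and a shortest path from $v$ to $w$ in $K$. The vertex on this path just before $w$ has both colors in its list and is special via $w$, for either $p$ or $a$.

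If $w$ omits $a$, it is colored $p$, and its predecessor $y$ is colored $a$ and is $a$-special. Since $a$ is also special for $v$ and $vy\notin E(G)$ (distance $\ge2$), the pair $v,y$ is a nonadjacent pair with common special color $a$, giving the second alternative.

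If instead $w$ omits $p$, then $w$ is colored $a$ and is $a$-special: it lists $a$, and its predecessor is colored $p$ and lists... I would rather take $w$ itself. If $w$ has a neighbor omitting $a$, then $w$ is $a$-special. Otherwise I would iterate \cref{prop:reduction}, replacing the obstruction by a shifted configuration. As a cleaner fallback, use \cref{lem:fiber} at $v$: $\cF(v,a)\neq\varnothing$, and equivalence holds within the fibre, so it suffices to connect $\psi$ to any one coloring with $v=a$. I expect this step, controlling the obstructions beyond the second layer using that $a$ is special for $v$ so that every $a$-special vertex nonadjacent to $v$ gives alternative two, to be where the real work lies.
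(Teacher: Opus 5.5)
Your handling of outcomes (ii), (iii) and (iv) is fine. In outcome (i) you correctly identify the difficulty: the swap of $K=K_\psi(v;p,a)$ can fail at a vertex $w$ at component distance at least three. Your argument, however, closes only the case where the nearest such $w$ is colored $p$ and omits $a$.

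In the other case, $\psi(w)=a$ and $p\notin L(w)$. The predecessor $y$ on a shortest path is then colored $p$ and is $p$-special via $w$, not $a$-special, so no pair with common special color $a$ appears. None of your proposed ways out closes this case:
\begin{itemize}
\item $w$ itself need not be $a$-special, since nothing forces a neighbor of $w$ to omit $a$.
\item \cref{prop:reduction} cannot be re-applied to $\psi$ at all, because its hypothesis $B_2\neq\varnothing$ fails there.
\item The fallback via \cref{lem:fiber} at $v$ only restates the goal of reaching some coloring with $v=a$.
\end{itemize}
As written, the case $\psi(w)=a$ is a genuine gap.

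The missing observation is that \cref{lem:pair} does not need a common special color. It applies to any two nonadjacent special vertices, with special colors that may differ. Put $\beta=\psi(y)\in\{p,a\}$. Since $y$ and $w$ are adjacent, $\psi(w)\neq\beta$, so the failed swap at $w$ means exactly $\beta\notin L(w)$. Hence $\beta\in L(y)\setminus L(w)$ is special for $y$. Also $vy\notin E(G)$: an edge $vy$ would force $\psi(y)=a$ and put $y$ at distance one in $K$. Now apply \cref{lem:pair} to $v$ with special color $a$ and $y$ with special color $\beta$. It shows that $\psi\in\cF(y,\beta)$ is $L$-equivalent to a coloring in $\cF(v,a)$. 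This works uniformly in both of your cases, so outcome (i) always yields the first alternative, and only outcome (ii) can produce the second. This is the paper's argument.
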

\begin{proof}
By \cref{lem:fiber}, the set $\cF(v,a)$ is nonempty. Outcome (iii) already gives the first conclusion, and outcome (iv) makes all $L$-colorings equivalent by \cref{lem:kernels}. In outcome (i), let $\psi$ be the coloring with both obstruction sets empty and let $K=K_\psi(v;p,a)$. If swapping $K$ is valid, it sends $v$ to $a$. Otherwise choose an invalid vertex $w$ of minimum distance from $v$ in $K$. Its distance is at least three. Let $y$ precede $w$ on a shortest path, and let $\beta=\psi(y)$. Then $\beta\in L(y)\setminus L(w)$, so $\beta$ is special for $y$. Also $vy\notin E(G)$: since $v,y\in K$, an edge between them would give distance one. By \cref{lem:pair}, the coloring $\psi\in\cF(y,\beta)$ is equivalent to a coloring in $\cF(v,a)$. Only outcome (ii) can remain.
\end{proof}

\section{Vertices with a common special color}\label{sec:common}

\begin{theorem}\label{thm:common}
Let $G$ be $3$-connected and let $L$ be a finite degree-assignment. If $x,y$ are nonadjacent special vertices and $c$ is special for both, then all $L$-colorings of $G$ are $L$-equivalent.
\end{theorem}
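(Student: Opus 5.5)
By \cref{lem:pair} with $a=b=c$, the set $\cF(x,c)\cap\cF(y,c)$ is nonempty and every coloring in $\cF(x,c)\cup\cF(y,c)$ lies in one Kempe class $\mathcal K$. So the plan is to show that an arbitrary $L$-coloring $\varphi$ is $L$-equivalent to some coloring in $\cF(x,c)$ or $\cF(y,c)$. Strict surplus anywhere is handled by \cref{lem:surplus}, and an induced bipartite theta or $K_4^+$ by \cref{lem:kernels}. We may therefore assume $L$ is tight and $G$ contains neither. If $\varphi(x)=c$ or $\varphi(y)=c$ there is nothing to do. Otherwise put $p=\varphi(x)$ and try to move $x$ to $c$ by a Kempe change on $K_\varphi(x;p,c)$.

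I would argue by a minimal-counterexample scheme on the obstruction sets $B_1,B_2,\dots$ of $K_\varphi(x;p,c)$, closely following \cref{cor:absorb}. If the swap is valid we are in $\cF(x,c)$. Otherwise pick an invalid vertex $w$ at minimum distance from $x$ in the component. Let $t$ be its predecessor on a shortest path and $\beta=\varphi(t)$, so $\beta$ is special for $t$.
\begin{itemize}
\item If $t$ is nonadjacent to $x$ (distance at least three), \cref{lem:pair} applied to $(t,\beta)$ and $(x,c)$ shows that $\varphi\in\cF(t,\beta)$ is equivalent to a coloring in $\cF(x,c)$, hence lies in $\mathcal K$.
\item If $B_1\neq\varnothing$, some neighbor $u$ of $x$ has $\varphi(u)=c$ and $p\notin L(u)$. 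Then $p$ is special for $x$ via $u$, and \cref{lem:fiber} at $(x,p)$ lets us re-color freely within $\cF(x,p)$. Since $y$ is nonadjacent to $x$, \cref{lem:pair} on the pair $(x,p)$, $(y,c)$ shows $\cF(x,p)$ meets $\cF(y,c)$. So $\varphi$ is equivalent to a coloring in $\mathcal K$.
\item The remaining case is $B_1=\varnothing$, $B_2\neq\varnothing$.
\end{itemize}

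The main obstacle is this last case, since \cref{prop:reduction} is stated only at a maximum-degree vertex. For it I would rerun the proof of \cref{prop:reduction} with $x$ playing the role of $v$, using $y$ as an extra anchor. The maximum-degree hypothesis was used only in the one-block clique case, so that case must be redone. There I expect to use the second special vertex $y$: either $y$ lies in the clique $H$, where its special witness contradicts the blockwise-uniform structure, or $y$ is one of the two deleted vertices. If this fails, I would instead symmetrize, choosing whichever of $x,y$ has larger degree to move first, or else work at a genuine maximum-degree vertex $v$.

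The cleaner route, which I would try first, is to fix a maximum-degree vertex $v$ and a color $a\in L(v)$ that is special for $v$ (with a fallback if $v$ is nonspecial). \cref{cor:absorb}, iterated over the obstruction structure as above, then reduces $\varphi$ either to $\cF(v,a)$ or to the existence of another common-special pair. Finally, I would link $\cF(v,a)$ to $\mathcal K$. If $v$ is nonadjacent to $x$ or to $y$, this follows from \cref{lem:pair}. If $v$ is adjacent to both, I would use 3-connectivity to find an intermediate special vertex and chain the fibers together with \cref{lem:pair}.

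The residual difficulty is the case where the new common-special pair produced by \cref{cor:absorb} differs from $\{x,y\}$. This needs a connecting argument between the Kempe classes attached to different common-special pairs. I expect that to be the hardest part, and to be handled by induction on the distance between the pairs, chaining fibers $\cF(\cdot,c)$ via \cref{lem:pair}.
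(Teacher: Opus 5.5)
\textbf{Verdict: there is a genuine gap.} The case $B_1=\varnothing\ne B_2$ is the whole content of the theorem, and neither of your routes closes it.

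\textbf{What works.} Your preliminary reductions are sound. Your handling of $B_1\ne\varnothing$ is actually shorter than the paper's. There $p$ is special for $x$ and $xy\notin E(G)$, so \cref{lem:pair} applied to $(x,p)$ and $(y,c)$ puts $\varphi$ in the class of $\cF(y,c)$ immediately. The paper instead rebuilds a coloring with empty first layer from a reduced assignment on $G-x$. The distance-$\ge3$ case is also fine.

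\textbf{Route 1 fails.} Rerunning \cref{prop:reduction} with $x$ as $v$ and $c$ as $a$ does not work, even if you repair the one-block clique case. Outcome (ii) of that proposition says that $G$ has two nonadjacent special vertices with common special color $c$. That is exactly your hypothesis, so every branch of the proof that ends in (ii) tells you nothing about the class of $\varphi$. Each such branch would have to be upgraded to an actual connection to $\cF(x,c)\cup\cF(y,c)$. For instance, the middle vertex $z$ of a path $x-z-t$ with $t\in B_2$ is $c$-colored and $c$-special, so \cref{lem:pair} with $y$ finishes unless $zy\in E(G)$. The proposition has no mechanism for the residual configurations in which all the relevant $c$-colored special vertices see $y$. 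Its reduction deletes $\{x,z\}$ and never uses $y$.

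\textbf{Route 2 is circular.} The second alternative of \cref{cor:absorb} is a property of $(G,L)$, not of $\varphi$. The only tool in the paper that turns it into an equivalence is \cref{thm:common} itself. ``Induction on the distance between pairs'' has no well-founded parameter, because every such pair lives in the same $G$, the same $L$ and the same set of colorings. The fallbacks, a nonspecial maximum-degree vertex or $v$ adjacent to both $x$ and $y$, are also left unspecified.

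\textbf{What is missing.} You need a direct argument at $x$ that uses $y$ concretely. The paper supposes a second class $\cQ$ exists. Among colorings in $\cQ$ with $x=p$ and $B_1=\varnothing$, it chooses one minimizing $|B_2|$.
\begin{enumerate}
\item For $t\in B_2$ on a path $x-z-t$, \cref{lem:pair} forces $zy\in E(G)$.
\item It deletes $x$ and $t$, both colored $p$, and removes $p$ from their neighbors' lists. Then $z$ has strict surplus, so all reduced colorings of $G-\{x,t\}$ are equivalent, and these changes lift to $G$.
\item It tries to color $G-\{x,t,y\}$ with $y=c$. An uncolorable component is a blockwise-uniform Gallai tree.
\item Chained applications of \cref{lem:pair}, together with excluding induced bipartite thetas and $K_4^+$, determine this component completely. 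There is a unique vertex $r$ adjacent to both $x$ and $y$, $t$ is nonspecial, every other vertex of the component has list $L(t)$ of size three, and the terminal blocks are triangles or odd cycles with a two-color palette.
\item Flipping a terminal block and making one or two short swaps at $t$ (and possibly at $r$) gives a coloring in $\cQ$ with strictly smaller $B_2$. This contradicts minimality.
\end{enumerate}
None of this appears in your proposal: the choice of $\{x,t,y\}$ as the deleted set, the forced edge $zy$, the rigidity of the uncolorable component, and the descent on $|B_2|$.
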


\subsection{A minimal obstruction and reduced lists}\label{sec:minimum-lists}

If \(L\) has strict surplus, \cref{lem:surplus} proves the theorem. Assume henceforth that \(L\) is tight.

By \cref{lem:pair}, the nonempty set \(\cF=\cF(x,c)\cup\cF(y,c)\) is contained in one Kempe class. Suppose there is another class \(\cQ\), so that \(\cQ\cap\cF=\varnothing\).

Fix \(\varphi_0\in\cQ\) and put \(p=\varphi_0(x)\ne c\). For any coloring \(\varphi\) with \(\varphi(x)=p\), let \(K_\varphi\) be the whole \((p,c)\)-component of \(x\). Define \(B_i(\varphi)\), for \(i=1,2\), using distance in \(K_\varphi\) and the fixed-list validity of the proposed swap, as in \cref{sec:preliminaries}.

We may choose a coloring in \(\cQ\) with \(x=p\) and \(B_1=\varnothing\). To see this, suppose \(B_1(\varphi_0)\ne\varnothing\), and define \(L'\) on \(G-x\) by deleting \(p\) from a neighbor list when possible and deleting \(c\) otherwise; leave nonneighbor lists unchanged.

The graph \(G-x\) is 2-connected. If it were not \(L'\)-colorable, \cref{lem:gallai} would make it a clique or an odd cycle with one common reduced list.

The clique case is impossible. The nonneighbor \(y\) retains \(c\) in \(L'(y)\), while a special neighbor of \(y\) that omits \(c\) is also in \(G-x\) and omits \(c\) from its reduced list. Thus the reduced lists are not identical.

The odd-cycle case is also impossible. Every cycle vertex has degree at least three in \(G\) and has only \(x\) outside the cycle, so every cycle vertex would be adjacent to \(x\), contrary to \(xy\notin E(G)\).

Hence \(G-x\) has an \(L'\)-coloring. Extend it by \(x=p\). The extension is proper and has empty first obstruction set. The original nonempty first layer witnesses that \(p\) is special for \(x\), so \cref{lem:fiber} connects the old and new \(p\)-colored endpoints. We retain the notation \(\varphi\) for the new coloring.

If a coloring in \(\cQ\) has \(x=p\) and both \(B_1,B_2\) empty, then it reaches \(\cF\). Indeed, a valid swap of \(K_\varphi\) sends \(x\) to \(c\). If the swap is invalid, choose a nearest invalid vertex \(t\); its component distance is at least three. Let \(z\) precede \(t\) and let \(\beta=\varphi(z)\). Then \(\beta\in L(z)\setminus L(t)\), so \(\beta\) is special for \(z\), and \(xz\notin E(G)\). \Cref{lem:pair} connects the current coloring in \(\cF(z,\beta)\) to \(\cF(x,c)\), a contradiction.

Therefore every coloring in

\[
\mathcal D=
\{\varphi\in\cQ:\varphi(x)=p,\ B_1(\varphi)=\varnothing\}
\]

has \(B_2(\varphi)\ne\varnothing\). Choose \(\varphi\in\mathcal D\) minimizing \(\mu(\varphi)=|B_2(\varphi)|>0.\)

Select \(t\in B_2(\varphi)\) and a shortest component path \(x-z-t\). Then

\[
\varphi(x)=\varphi(t)=p,
\qquad
\varphi(z)=c,
\qquad
c\notin L(t),
\qquad
xt\notin E(G).
\]

The vertices \(x,z,t,y\) are distinct. The vertex \(z\) is \(c\)-special, witnessed by \(t\). If \(zy\notin E(G)\), \cref{lem:pair} applied to \(z,y\) connects the present coloring to \(\cF(y,c)\). Hence \(xz,zt,zy\in E(G)\) and \(xy,xt\notin E(G)\). No assumption has been made about the edge \(ty\).

Put \(H=G-\{x,t\}\) and \(J=G-\{x,t,y\}\). Define \(L_H\) by deleting \(p\) from the list of each neighbor of \(x\) or \(t\), and leaving every other list unchanged. A vertex adjacent to both loses \(p\) only once, so \(L_H\) is a degree-assignment. The vertex \(z\) loses one color but two neighbors, and therefore \(|L_H(z)|=d_G(z)-1>d_H(z).\)

The restriction \(\varphi|_H\) is one such \(L_H\)-coloring: properness at the fixed boundary vertices \(x=t=p\) implies that no neighbor of \(x\) or \(t\) is colored \(p\), so the restriction respects every deletion used to define \(L_H\). By \cref{lem:surplus}, all \(L_H\)-colorings of \(H\) are equivalent.

For each component \(C\) of \(J\), define \(M_C\) by additionally deleting \(c\) at every neighbor of \(y\) in \(C\). Again this is a degree-assignment. If every component were \(M_C\)-colorable, color them independently and assign \(c\) to \(y\). This gives an \(L_H\)-coloring of \(H\). The equivalence in \(H\) lifts to \(G\) with \(x=t=p\) fixed, as proved in \cref{sec:terminal-recolorings}, and reaches \(\cF(y,c)\), a contradiction.

Thus some component \(C\) is not \(M_C\)-colorable. \Cref{lem:gallai} makes every uncolorable component a blockwise-uniform Gallai tree, with \(|M_C(u)|=d_C(u)\) for every \(u\in C\).

Equality in the deletion bounds gives the following properties.

\begin{enumerate}
\def\labelenumi{\arabic{enumi}.}
\tightlist
\item
  No vertex of \(C\) is adjacent to both \(x\) and \(t\).
\item
  Every neighbor of \(x\) or \(t\) in \(C\) lists \(p\), and every neighbor of \(y\) in \(C\) lists \(c\).
\item
  With \(M=M_C\),
  \[
  L(u)=M(u)\mathbin{\dot\cup}
  \bigl(\{p\}\text{ if }u\text{ sees }x\text{ or }t\bigr)
  \mathbin{\dot\cup}
  \bigl(\{c\}\text{ if }u\text{ sees }y\bigr).
  \]
\item
  The component \(C\) has a neighbor to each of \(x,t,y\); otherwise its external neighborhood would have size at most two.
\item
  The component of \(J\) containing \(z\) is colorable from its reduced lists, because \(z\) is adjacent to both \(x,t\) and has strict surplus after the reduction.
\end{enumerate}

Consequently \(z\) is anticomplete to every uncolorable component, and an uncolorable component cannot be a singleton.

\subsection{Structure of the uncolorable component}\label{sec:component-structure}

\proofpart{Special vertices in the uncolorable components}

Let \(v\in C\) be special, with a special color \(a\). Since \(z\) and \(v\) are in different components of \(J\), they are nonadjacent. \Cref{lem:pair} connects the current coloring in \(\cF(z,c)\) to \(\cF(v,a)\). If \(v\) is nonadjacent to either \(x\) or \(y\), a second application connects to \(\cF(x,c)\) or \(\cF(y,c)\). Therefore every special vertex in an uncolorable component must be adjacent to both \(x\) and \(y\). Call a vertex of an uncolorable component an \emph{\(XY\) vertex} if it is adjacent to both \(x\) and \(y\).

There is at most one \(XY\) vertex over all uncolorable components. If \(r_1,r_2\) were two, then on the induced set \(\{x,y,z,r_1,r_2\}\):

\begin{itemize}
\tightlist
\item
  if \(r_1r_2\notin E(G)\), the three length-two \(x,y\) paths through \(z,r_1,r_2\) form a bipartite theta;
\item
  if \(r_1r_2\in E(G)\), the four vertices \(x,y,r_1,r_2\) form \(K_4\) minus \(xy\), while \(x-z-y\) subdivides the missing edge, giving \(K_4^+\).
\end{itemize}

There are no other edges: \(xy\) is absent and \(z\) is anticomplete to uncolorable components. \Cref{lem:kernels} would make \(G\) degree-swappable, contradicting \(\cQ\).

For an uncolorable component put \(S_C=\{u\in C:c\in L(u)\}.\)

No neighbor \(u\) of \(t\) in \(C\) is in \(S_C\). Otherwise \(u\) is \(c\)-special, witnessed by \(t\). Since \(z\) is \(c\)-special and anticomplete to \(C\), \cref{lem:pair} first connects the present coloring in \(\cF(z,c)\) to \(\cF(u,c)\). The boundary equality makes \(u\) nonadjacent to \(x\), so a second application connects to \(\cF(x,c)\), a contradiction.

A \(y\)-neighbor is in \(S_C\), while a \(t\)-neighbor is not. Along a path between them, an edge crosses \(S_C\) and its complement. The inside endpoint is \(c\)-special and hence must be an \(XY\) vertex. Thus every uncolorable component has an \(XY\) vertex. Uniqueness gives exactly one uncolorable component \(C\) and exactly one such vertex \(r\). Every edge between \(S_C\) and \(C-S_C\) has its endpoint in \(S_C\) at \(r\).

The vertex \(r\) is \(c\)-special. It is nonadjacent to \(z\) and to \(t\); the latter follows because \(r\) sees \(x\) and no vertex of \(C\) sees both \(x,t\). Every vertex of \(C-r\) is nonspecial.

\proofpart{The vertex $t$ is nonspecial}

Suppose \(a\) were special for \(t\). Consider the following sets of colorings

\[
\cF(z,c)
\longleftrightarrow
\cF(r,c)
\longleftrightarrow
\cF(t,a)
\longleftrightarrow
\cF(x,c).
\]

The consecutive vertex pairs are nonadjacent: \(z,r\) are in different \(J\)-components, \(r\) is nonadjacent to \(t\), and \(x,t\) are nonadjacent because both are colored \(p\). Each arrow is \cref{lem:pair}. The sequence takes the current coloring from \(\cQ\) to the target class, a contradiction. Hence \(t\) is nonspecial. This argument does not depend on whether \(ty\) is an edge.

\proofpart{Only $r$ lists $c$}

Every component \(D\) of \(C-r\) is either contained in \(S_C\) or disjoint from \(S_C\), because every edge between \(S_C\) and its complement has endpoint \(r\) in \(S_C\). Suppose \(D\subseteq S_C\). It has no neighbor to \(t\).

If \(C\) has one block, then \(C-r\) is connected and contains a \(t\)-neighbor, a contradiction. If \(C\) has more than one block, choose a farthest endblock \(B\) in the branch containing \(D\), with interior \(I\) avoiding \(r\) and contained in \(D\). Three-connectivity forces \(I\) to attach to at least two of \(x,t,y\). It misses \(t\), so it attaches to \(x\) and \(y\). The two attachments are distinct: no vertex of \(I\) is adjacent to both \(x\) and \(y\), since \(r\) is the unique \(XY\) vertex.

All vertices of \(I\) have the same reduced block palette \(S_B\). At an \(x\)-neighbor the original list contains \(c\) and the reduction deletes \(p\), so \(c\in S_B\). At a \(y\)-neighbor the reduction deletes \(c\), so \(c\notin S_B\), a contradiction. Hence every component of \(C-r\) is disjoint from \(S_C\). Thus \(S_C=\{r\}\), and every \(y\)-neighbor must be \(r\).

Every component \(D\) of \(C-r\) has all external neighbors in \(\{r,x,t\}\). Three-connectivity forces an attachment to both \(x\) and \(t\). All vertices of \(D\) are nonspecial, so adjacent vertices have equal lists and all vertices of \(D\) share one list \(T_D\).

At a \(t\)-attachment \(s\), nonspecialness of \(s\) gives \(T_D\subseteq L(t)\), while nonspecialness of \(t\) gives \(L(t)\subseteq T_D\). Hence every \(T_D\) is the same list \(T=L(t).\)

The same inclusions at attachments to \(x\) and \(r\) give \(T\subseteq L(x),L(r)\). Nonspecialness of \(t\) gives \(T\subseteq L(z)\), and also \(T\subseteq L(y)\) if \(ty\) is present. We have \(p\in T\) and \(c\notin T\). Tightness gives \(d_G(u)=d_G(t)=|T|=:k\) for every \(u\in C-r\). Since \(G\) is 3-connected, \(k\ge3\).

\proofpart{Terminal blocks force $k=3$}

First suppose \(C\) has more than one block. Choose an endblock \(B\) whose interior \(I\) avoids \(r\). It has no \(y\)-neighbor, so its interior must contain both neighbors of \(x\) and neighbors of \(t\). At any such neighbor the reduced list is \(T\setminus\{p\}\), and therefore the block palette is \(S_B=T\setminus\{p\}.\)

A vertex of \(I\) adjacent to neither \(x\) nor \(t\) would have reduced list \(T\), a contradiction. Thus every vertex of \(I\) is adjacent to exactly one of \(x,t\), and both types occur.

If \(B\) is an odd cycle, then \(|S_B|=2\) and \(k=3\). If \(B=K_m\), then \(|S_B|=m-1\), so \(m=k\). An edge block has only one interior vertex, so it cannot contain both types. If \(k\ge4\), among the at least three interior vertices choose two of one type and one of the other. If the repeated type is the \(x\)-type, then \(\{x,z,t,a_1,a_2,s\}\) induces a \(K_4^+\) whose branch vertices are \(x,a_1,a_2,s\) and whose missing \(xs\) edge is replaced by \(x-z-t-s\). The symmetric construction uses the path \(t-z-x-s\) when the repeated type is the \(t\)-type. Each chosen interior vertex is adjacent to exactly one of \(x,t\), and \(z\) is anticomplete to \(C\), so the subgraph is induced; the vertex \(y\) is not selected, so a possible edge \(ty\) is irrelevant. This contradicts \(\cQ\). Therefore \(k=3\), and a terminal clique is a triangle.

Now suppose \(C\) has one block. Since it attaches to \(t\), the common block palette is again \(T\setminus\{p\}\). Every vertex other than \(r\) must be adjacent to exactly one of \(x,t\); otherwise its reduced list would be \(T\), not \(T\setminus\{p\}\). If the block is an odd cycle, then \(k=3\).

Suppose instead that \(C=K_m\); then \(m=k\ge3\). If \(C-r\) contains a neighbor \(u\) of \(x\) and a neighbor \(s\) of \(t\), then \(\{x,z,t,r,u,s\}\) induces a \(K_4^+\), with missing edge \(xs\) replaced by \(x-z-t-s\). If all \(k-1\) vertices other than \(r\) are neighbors of \(t\), then those vertices together with \(z\) exhaust all \(k\) neighbors of \(t\). Hence \(ty\) is absent as a consequence of the degree count. Taking two such neighbors \(s_1,s_2\), the induced set \(\{t,z,y,r,s_1,s_2\}\) is a \(K_4^+\) whose missing edge \(tr\) is replaced by \(t-z-y-r\); the just-proved absence of \(ty\) excludes a chord. Both alternatives contradict \(\cQ\). Thus a one-block clique is impossible.

Write \(A=T\setminus\{p\}=\{a,b\}\). Thus \(p,a,b,c\) are distinct. Every block palette incident with \(r\) is in \(T\), while \(p\notin M(r)\) because \(r\) sees \(x\). Hence \(M(r)\subseteq A\). Conversely \(T\subseteq L(r)\) and the exact boundary decomposition gives \(L(r)=M(r)\mathbin{\dot\cup}\{p,c\}.\)

Thus \(A\subseteq M(r)\) and consequently

\[
M(r)=A,
\qquad
L(r)=T\cup\{c\},
\qquad
d_C(r)=2,
\qquad
d_G(r)=4.
\]

\subsection{Terminal recolorings and strict descent}\label{sec:terminal-recolorings}

On \(C\), the current \(L_H\)-assignment is exactly \(M\) with \(c\) restored only at \(r\). Every such coloring uses \(c\) at \(r\); otherwise it would be an \(M\)-coloring of the uncolorable component. No other vertex of \(C\) can use \(c\).

Root the blocks of \(C\) at \(r\). The leaf-block argument in \cref{sec:rooted-colorings} shows that every reduced-list coloring is obtained by assigning \(c\) to \(r\), permuting the palette \(S_B\) on the nonentry vertices of each clique block, and alternating its two colors on \(B-e_B\) for each odd-cycle block. The entry color is outside \(S_B\). Conversely, all these choices may be made independently, since palettes at a cutvertex are disjoint. In particular, the orientation of an endblock path may be changed while all other vertices stay fixed.

Insert any desired rooted coloring of \(C\) into \(H\), leaving every vertex outside \(C\) fixed, including \(y\) with color \(d=\varphi(y)\). This is an \(L_H\)-coloring: the only \(C\)-neighbor of \(y\) is \(r\), which remains colored \(c\ne d\). Since \(z\) has strict surplus, \cref{lem:surplus} connects the old and new \(L_H\)-colorings.

Every \(L_H\)-valid Kempe change in \(H\) is also an \(L\)-valid Kempe change in \(G\), with \(x=t=p\) fixed. If the two colors omit \(p\), the changed component cannot meet the fixed boundary vertices. If the pair contains \(p\), every neighbor of \(x\) or \(t\) has \(p\) deleted from its \(L_H\)-list. Such a vertex cannot be in a valid swapped component: if it currently had the other color, the swap would assign forbidden \(p\). Hence the component has no corresponding bichromatic edge to \(x\) or \(t\) and is whole in \(G\).

Only the endpoints are used here; the obstruction sets need not remain fixed along the lifted sequence. Every coloring of the sequence is still in \(\cQ\).

\proofpart{Interchanging the two colors on a terminal block}

If \(C\) has more than one block, choose an endblock \(B\) whose interior avoids \(r\), and let \(q\) be its cutvertex. If \(C\) has one block, take \(B=C\) and \(q=r\). In either case, call \(B\) a \emph{terminal block} and let \(P=B-q\).

The structural argument above shows that \(B\) is a triangle or an odd cycle with palette \(A=\{a,b\}\). Hence \(P\) is a path of even order, alternately colored from \(A\). Within \(C\), the only possible neighbor of \(P\) outside \(P\) is \(q\): this is immediate when \(B=C\), and otherwise follows because the interior vertices of an endblock belong to no other block. Since \(C\) is a component of \(G-\{x,t,y\}\), its other external neighbors are in \(\{x,t,y\}\). The only \(y\)-neighbor in \(C\) is \(r\), which is not in \(P\). Thus
\[
N_G(P)\setminus P\subseteq\{q,x,t\}.
\]
The entry rule gives \(\varphi(q)\notin A\), and \(\varphi(x)=\varphi(t)=p\notin A\). Therefore \(P\) is a component of \(G_\varphi(a,b)\), not just of its restriction to \(C\). Every vertex of \(P\) has list \(T=\{p,a,b\}\), so interchanging \(a\) and \(b\) on \(P\) is \(L\)-valid.

This change reverses the alternating orientation of \(P\) and fixes every vertex outside \(P\). Since \(A\cap\{p,c\}=\varnothing\), it leaves the \((p,c)\)-subgraph, its component distances, and \(B_1,B_2\) unchanged. Either orientation is therefore available within \(\mathcal D\), with the same value of \(\mu\).

\Needspace{10\baselineskip}
We shall use the following criterion. Suppose a sequence ends at a coloring for which:

\begin{enumerate}
\def\labelenumi{\arabic{enumi}.}
\tightlist
\item
  \(x\) remains colored \(p\);
\item
  no new vertex is colored \(c\);
\item
  every newly \(p\)-colored vertex other than \(x\) has no \(c\)-neighbor;
\item
  the selected vertex \(t\in B_2\) leaves the color set \(\{p,c\}\);
\item
  every other changed vertex of the old \((p,c)\)-component also leaves \(\{p,c\}\).
\end{enumerate}

Then the new \((p,c)\)-component \(K'\) of \(x\) is a proper subset of the old component \(K\). Newly \(p\)-colored vertices are isolated in the \((p,c)\)-subgraph and no new \(c\)-vertices exist, so only deletions can affect the component of \(x\).

Every vertex in the new first obstruction set was already in the old one. If \(u\) belongs to the new second obstruction set, take a path \(x-w-u\) in \(K'\). The vertex \(w\) was already colored \(c\), since no new vertex receives \(c\). Also, \(u\) was already colored \(p\): a newly \(p\)-colored vertex has no \(c\)-neighbor. Thus the same colored path existed before the changes. Properness rules out the edge \(xu\), so the old component distance was exactly two. The lists have not changed, and hence \(u\) belonged to the old second obstruction set. Therefore

\[
B_1(\text{new})\subseteq B_1(\text{old}),
\qquad
B_2(\text{new})\subsetneq B_2(\text{old}),
\]

where strictness follows because \(t\) is removed. Thus an empty first layer remains empty and \(\mu\) strictly decreases.

\proofpart{The case $ty\in E(G)$}

The vertex \(t\) has degree three and already sees \(z,y\), so \(N_G(t)=\{z,y,s\}\) for a unique vertex \(s\in C\). Choose the terminal block as above. Its interior contains a neighbor of \(t\); this must be the unique vertex \(s\). Let \(q\) be the entry and \(P=B-q\).

Put \(d=\varphi(y)\). If \(d\in A\) and the current orientation does not give \(\varphi(s)=d\), flip the whole component \(P\). If \(d\notin A\), keep either orientation. The terminal flip preserves the \((p,c)\)-component data and \(\mu\).

Now some color \(h\in A\) is absent from \(N(t)\): the vertex \(z\) has color \(c\notin A\), and the only possible \(A\)-colors at the other two neighbors have been made equal when necessary. Properness also excludes \(p\) from \(N(t)\). Hence the \((p,h)\)-component at \(t\) is the singleton \(\{t\}\). Change \(t\) from \(p\) to \(h\).

No vertex is newly colored \(p\) or \(c\), and \(t\) leaves the old \((p,c)\)-component. The descent criterion above applies, giving a coloring in \(\mathcal D\) with smaller \(\mu\), a contradiction.

\proofpart{The case $ty\notin E(G)$}

Now \(t\) has exactly two neighbors besides \(z\). Choose a terminal block \(B\) and its path \(P=B-q\). The path \(P\) contains one or both of these neighbors.

\Needspace{4\baselineskip}\noindent\emph{Only one neighbor of \(t\) is in \(P\).}\quad Let \(s\) be the unique neighbor of \(t\) in \(P\), and let \(w\) be the other neighbor of \(t\) besides \(z,s\). If \(\varphi(w)\in A\) and \(\varphi(s)\ne\varphi(w)\), flip \(P\) to make the two colors equal; otherwise keep the current orientation. Then some \(h\in A\) is absent from \(N(t)\). The singleton change \(t:p\to h\) satisfies the descent criterion above and contradicts minimality.

\Needspace{4\baselineskip}\noindent\emph{Both remaining neighbors of \(t\) are in \(P\).}\quad Let \(s_1,s_2\) be the two neighbors of \(t\) in \(P\). If their distance on \(P\) is even, they have the same color. The other color of \(A\) is absent from \(N(t)\), so the same singleton change applies.

We may therefore assume that \(s_1,s_2\) are at odd distance on \(P\) and have different colors.

Suppose one of \(s_1,s_2\), say \(s\), is internal to \(P\), and let \(h=\varphi(s)\). Its two path neighbors and the other vertex of \(\{s_1,s_2\}\) use the other color of \(A\). Since \(d_G(s)=3\), the \((p,h)\)-component through \(t\) is exactly the edge \(ts\). Swap it. The new \(p\)-vertex \(s\) has no \(c\)-neighbor, and the descent criterion above applies.

It remains to consider the case where \(s_1,s_2\) are the endpoints of \(P\). Suppose first that \(q\ne r\). Then \(q\) is the cutvertex of the endblock \(B\). Since \(d_G(q)=3\), its two terminal-block edges leave exactly one parent-side edge \(qw\) and no boundary attachment. Its original list is \(T\). The terminal palette \(A\) forces the remaining incident palette to be \(\{p\}\), so the parent block is the edge \(qw\). The rooted entry rule gives \(\varphi(q)=p\) and \(\varphi(w)\ne p\). Choose \(h\in A\) after fixing the actual color of \(w\), so that \(h\ne\varphi(w)\), and let \(s_h\) be the endpoint of \(P\) colored \(h\). The \((p,h)\)-component is exactly the path \(t-s_h-q.\)

Indeed, the other endpoint and the relevant path neighbors use the other color of \(A\), the parent neighbor \(w\) does not use \(h\), and \(z\) uses \(c\). Swap this component. The only new \(p\)-vertex is \(s_h\); its neighbors after the swap have colors \(h,h\) and the other member of \(A\), so it has no \(c\)-neighbor. The descent criterion above applies.

Finally, suppose \(q=r\). If there were another block, the rooted block tree would have a terminal block away from \(r\), whose interior would contain a third neighbor of \(t\) in \(C\). This is impossible because \(t\) has only two neighbors other than \(z\). Thus \(C=B\) is one odd-cycle block.

Choose \(h\in A\) different from \(d=\varphi(y)\), and let \(s_h\) be the endpoint colored \(h\). Since \(\varphi(r)=c\), the \((p,h)\)-component through \(t\) is the edge \(t s_h\). Swap it. At this intermediate coloring, the new \(p\)-vertex \(s_h\) is adjacent to the \(c\)-colored vertex \(r\), so strict descent does not yet follow.

Let \(h'\) denote the unique color in \(A\setminus\{h\}\). The vertex \(r\) has exactly four neighbors: \(x,y\) and the two cycle endpoints. Their colors are now \(p,d,p,h'\), so none has color \(h\). Since \(h\in L(r)\), the \((c,h)\)-component at \(r\) is the singleton \(\{r\}\). Change \(r\) from \(c\) to \(h\).

Only after this second change do the conditions of the descent criterion hold: no new \(c\) is created, the new \(p\)-vertex \(s_h\) has no \(c\)-neighbor, and the vertices \(t,r\) of the old \((p,c)\)-component have both left \(\{p,c\}\). The descent criterion now gives strict descent.

In each case we obtain a coloring in \(\mathcal D\) with fewer vertices in \(B_2\), contradicting the choice of \(\varphi\). Hence no Kempe class is disjoint from \(\cF(x,c)\cup\cF(y,c)\). By \cref{lem:pair}, this union is contained in one class, so all \(L\)-colorings are equivalent. This proves \cref{thm:common}.\hfill\(\square\)

\section{Proof of the main theorem}\label{sec:main-proof}
\begin{proof}[Proof of \cref{thm:main}]
Fix a degree-assignment $L$ on $G$.

If some vertex has strict surplus, \cref{lem:surplus} proves the result. Hence assume that $L$ is tight.

Suppose first that all lists are the same set $S$. Then $d_G(v)=|S|$ for every vertex, so $G$ is regular of degree at least three. \Cref{thm:regular} applies; its two exceptions are excluded by hypothesis. Thus an $L$-coloring exists and all $L$-colorings are equivalent.

We may therefore assume that the tight lists are not all identical.

There is a special vertex $v$ of maximum degree. Indeed, if a maximum-degree vertex $u$ were nonspecial, then $L(u)\subseteq L(w)$ for every neighbor $w$ of $u$. Since $|L(u)|=\Delta(G)$ and $|L(w)|=d_G(w)\le\Delta(G)$, equality would hold throughout: $d_G(w)=\Delta(G)$ and $L(w)=L(u)$. If every maximum-degree vertex were nonspecial, connectedness would propagate the same degree and list to every vertex, contrary to nonidentity.

Choose a special color $a\in L(v)$. By \cref{lem:fiber}, the set $\cF(v,a)$ is nonempty and all its members belong to one Kempe class, denoted by $\cK_0$.

Assume for contradiction that a second Kempe class $\cQ$ exists. Fix $\varphi\in\cQ$ and put $p=\varphi(v)$. Since $\cQ\cap\cF(v,a)=\varnothing$, we have $p\ne a$.

Let $B_1(\varphi)=\{u\in N(v):\varphi(u)=a,\ p\notin L(u)\}.$

If $B_1(\varphi)=\varnothing$, continue to the second layer. Otherwise define a degree-assignment $M$ on $G-v$ by
\[
M(u)=
\begin{cases}
L(u)\setminus\{p\},&u\in N(v),\ p\in L(u),\\
L(u)\setminus\{a\},&u\in N(v),\ p\notin L(u),\\
L(u),&u\notin N(v).
\end{cases}
\]

If $G-v$ is $M$-colorable, extend an $M$-coloring by $v=p$. The extension is proper and has empty first obstruction set. The old nonempty first layer witnesses that $p$ is special for $v$, and \cref{lem:fiber} connects the old and new colorings inside $\cQ$.

Suppose $G-v$ is not $M$-colorable. Since $G-v$ is 2-connected, \cref{lem:gallai} makes it either a clique or an odd cycle with one uniform reduced list.

The clique case is impossible. If $G-v=K_m$, noncompleteness gives a nonneighbor of $v$, whereas connectedness gives a neighbor $w$ of $v$. Then $d_G(w)=m>d_G(v)$, contradicting the choice of $v$.

In the odd-cycle case every rim vertex has degree at least three and has only $v$ outside the cycle, so $v$ is universal and $G$ is an odd wheel. Its rim length $m$ is at least five. Let $S$ be the common reduced two-list. A vertex of $B_1$ has original list $S\cup\{a\}$, so $p,a\notin S$. Exactness of the reduction shows that every rim list is either $S\cup\{p\}$ or $S\cup\{a\}$. Since $a$ is special for $v$, some rim vertex $r$ omits $a$ and therefore has list $S\cup\{p\}$.

Because $|L(v)|=m\ge5$, choose $b\in L(v)\setminus(S\cup\{p,a\}).$

No rim vertex lists $b$. The $(p,b)$-component at $v$ is the singleton $\{v\}$, so change $v$ from $p$ to $b$. Now color $r$ with $p$ and alternate the two colors of $S$ along the even-order path obtained from the rim by deleting $r$. This is a proper coloring with $v=b$ and with no rim vertex colored $a$. The color $b$ is special for $v$, so \cref{lem:fiber} connects the first $b$-colored endpoint to this canonical coloring. Finally the $(b,a)$-component at $v$ is the singleton $\{v\}$, and changing it reaches $\cF(v,a)$, contradicting $\varphi\in\cQ$.

Thus $\cQ$ contains a coloring, still denoted $\varphi$, with $\varphi(v)=p$ and $B_1(\varphi)=\varnothing$.

Let $K=K_\varphi(v;p,a)$ and let $B_2$ be the second obstruction set measured inside $K$.

If $B_2=\varnothing$ and swapping $K$ is valid, the swap reaches $\cF(v,a)$. If $B_2=\varnothing$ but the swap is invalid, choose a nearest invalid vertex $t$. Its component distance is at least three. If $y$ precedes $t$ on a shortest path and $\beta=\varphi(y)$, then $\beta\in L(y)\setminus L(t)$, so $\beta$ is special for $y$. Moreover $vy\notin E(G)$. \Cref{lem:pair} connects the present coloring in $\cF(y,\beta)$ to $\cF(v,a)$, again a contradiction.

Hence $B_2\ne\varnothing$. Apply \cref{prop:reduction}. Outcome (i) is covered by the preceding paragraph, and outcome (iii) directly reaches $\cF(v,a)$. Outcome (iv) makes $G$ degree-swappable by \cref{lem:kernels}, so it also reaches this nonempty set. The remaining outcome gives two nonadjacent special vertices with common special color $a$.

\Cref{thm:common} then says that all proper $L$-colorings are equivalent, contradicting the existence of $\cQ$. Therefore every proper $L$-coloring belongs to $\cK_0$.\qedhere 

\end{proof}

\section*{Declaration of competing interest}
The author declares that he has no known competing financial interests or personal relationships that could have appeared to influence the work reported in this paper.

\section*{Acknowledgements}
The research was partially supported by RGC Competitive Earmarked Research Grant 16308821.

\section*{Declaration of generative AI use}
ChatGPT assisted in developing proof arguments, checking intermediate steps, and preparing the manuscript. In particular, this assistance included the maximum-degree reduction in \cref{prop:reduction} and the terminal-block recoloring argument in the proof of \cref{thm:common}. The author takes full responsibility for the accuracy and content of this work.

\end{document}